\DeclarePairedDelimiter{\ceil}{\lceil}{\rceil}
\definecolor{red}{rgb}{1,0,0}
\definecolor{blue}{rgb}{0,0,1}
\numberwithin{equation}{section}
\numberwithin{table}{section}
\numberwithin{figure}{section}
\newcommand{\bld}[1]{\boldsymbol{#1}}
\newcommand{\dm}{\Omega}
\newcommand{\bdry}{\partial \Omega}
\newcommand{\Nh}{{\mathcal{N}_{h}}}
\newcommand{\Nhi}{{\mathcal{N}_{h}^I}}
\newcommand{\Nhb}{{\mathcal{N}_h^B}}
\newcommand{\gradv}{\bld{\nabla}}
\newcommand{\norm}[1]{\|\,#1\,\|}
\newcommand{\MA}{Monge-Amp\`ere }
\newcommand{\dist}{\textrm{dist}}
\newcommand{\wha}{w^{\epsilon}}
\newcommand{\whb}{w_{\epsilon}}
\newcommand{\Cea}{\mathcal{C}^{\epsilon}}
\newcommand{\Ceb}{\mathcal{C}_{\epsilon}}
\newtheorem{prop}{Proposition}[section]
\newtheorem{remark}{Remark}[section]
\newcommand{\p}{\partial}
\newcommand{\bbZ}{\mathbb{Z}}
\newcommand{\bbR}{\mathbb{R}}
\newcommand{\calC}{\mathcal{C}}
\newcommand{\eps}{\epsilon}
\newcommand{\nab}{\nabla}
\newcommand{\mct}{\mathcal{T}_h}
\title{Rates of convergence in $W^2_{\lowercase{p}}$-norm for the Monge-Amp\`ere Equation}
\author{Michael Neilan
\thanks{Department of Mathematics, University of Pittsburgh.  The first author was partially supported by
NSF grants DMS-1541585 and DMS-1719829.   }
\and 
Wujun Zhang
\thanks{Department of Mathematics, Rutgers University. The second author was supported by the start up funding at Rutgers Unviersity.}}
\begin{document}
\maketitle

\begin{abstract}
We develop discrete
$W^2_p$-norm error estimates
for the Oliker-Prussner method applied
to the Monge-Amp\`ere equation.
This is obtained by extending discrete
Alexandroff estimates and showing that 
the contact set of a nodal function
contains information on its second order
difference.  In addition, we show
that the size of the complement
of the contact set is controlled
by the consistency of the method.
Combining both observations, we show that the error estimate
\[
\|u - u_h\|_{W^2_p(\Nhi)} \leq
C \begin{cases}
  h^{1/p}   \quad &\mbox{if $p > d$,}
\\
h^{1/d} \big(\ln\left(\frac 1 h \right)\big)^{1/d}  \quad &\mbox{if $p \le d$,}
\end{cases}
\] 
where the constant $C$ 
depends on $\|{u}\|_{C^{3,1}(\bar\dm)}$, the dimension $d$, and the constant $p$.
Numerical examples are given in two space dimensions and confirm that the estimate is sharp in several cases. 
\end{abstract}

\pagestyle{myheadings}
\thispagestyle{plain}
\markboth{M. Neilan and W. Zhang}{$W^2_p$ estimate of the \MA equation}

%
\section{Introduction}
%
\thispagestyle{empty}
In this paper we develop 
discrete $W^2_p$ error estimates 
for numerical approximations of 
 the \MA equation
with Dirichlet boundary conditions:
\begin{subequations}\label{MA}
\begin{alignat}{2}\label{MA1}
\det(D^2 u) & = f\qquad \text{in }\Omega,\\
\label{MA2}
u &=0 \qquad \text{on }\p\Omega,
\end{alignat}
\end{subequations}
with given function $f\in C(\bar\Omega)$
satisfying $\underline{f} \le f\le \bar{f}$ in $\bar\Omega$,
for some positive constants $\underline{f},\bar{f}$.
Here, $D^2 u$ denotes the Hessian matrix of $u$.
The domain $\Omega\subset \mathbb{R}^d$ 
is assumed to be bounded and uniformly convex.
We seek a solutions to \eqref{MA}
in the class of convex functions, 
which ensures ellipticity of
the problem and its unique solvability \cite{Gutierrez01}.

The method  we analyze in this paper 
is due to Oliker and Prussner \cite{OlikerPrussner88}, which
is based  on a geometric notion of generalized
solutions called Alexandroff solutions.  In this setting,
the determinant of the Hessian matrix of $u$
in \eqref{MA1} is interpreted as the 
measure of the sub-differential of $u$; see \cite{Gutierrez01}.
The method proposed
in \cite{OlikerPrussner88} simply poses this solution
concept onto the space of nodal functions
and enforces the geometric condition
implicitly given in \eqref{MA1} at a finite number
of points.  Namely, the method seeks a nodal function $u_h$
satisfying the Dirichlet boundary conditions on boundary nodes, and
\[
|\p u_h(x_i)| = f_i
\]
at all interior grid points $x_i$.  Here,  $\p u_h(x_i)$ denotes
the sub-differential of $u_h$ at $x_i$, $|\cdot|$ is
the $d$-dimensional Lebesgue measure, 
 $f_i\approx h^d f(x_i)$, and $h$ is the mesh parameter.
Existence and uniqueness of the method, and convergence
to the Alexandroof solution is shown in two dimensions in \cite{OlikerPrussner88}.

Recently, Nochetto and the second author
derived pointwise error estimates of the Oliker-Prussner
scheme \cite{NochettoZhang17}.  There it is shown that, if
the exact convex solution to \eqref{MA} is sufficiently
smooth, and if the nodes are translation invariant, 
then the error is of (optimal) order $\mathcal{O}(h^2)$
in the $L_\infty$ norm.  Generalities of these results, depending
on solution regularity, are also given.
The main tools
to develop these results include operator consistency estimates,
the Brunn-Minkowski inequality, 
and discrete 
Alexandroff-Bakelman-Pucci
estimates for continuous, piecewise linear functions \cite{KuoTrudinger00,NochettoZhang17A}.

Our contribution in this paper is to extend these results and to develop
discrete $W^2_p$ error estimates for all $p\in [1,\infty)$.
 To summarize this result,  we first introduce a discrete $W^2_p$ norm for discrete nodal functions. We
define the second-order difference
operator of a nodal or continuous function $v$
in the direction $e\in \bbZ^d$ at a node $x_i$ as
\begin{align*}
\delta_e v(x_i):= \frac{v(x_i+ h e) -2 v(x_i)+v(x_i-he)}{|e|^2 h^2},
\end{align*}
where $|e|$ denotes the Euclidean norm
of $e$, and it is assumed that $x_i\pm h e$ is also a node in the domain $\bar{\dm}$.
If either $x_i - h e$ or $x_i + h e$ is outside $\dm$, we define 
\begin{align*}
\delta_e v(x_i):= \frac{\rho_2 v(x_i+ \rho_1 h e) - (\rho_1 + \rho_2) v(x_i)+ \rho_1 v(x_i- \rho_2 he)}{\rho_1 \rho_2 (\rho_1 + \rho_2) |e|^2 h^2 /2},
\end{align*}
where $\rho_1$ and $\rho_2$ are the largest number in $(0,1]$ such that $x_i+ \rho_1 h e$ and $x_i- \rho_2 h e$ are in $\bar{\Omega}$, respectively.
The (weighted) $W^2_p$-norm of a nodal function $v$ with respect
to direction $e$ on a set of nodes $S$ is given by
\begin{align*}
\|v\|_{W^2_{p}(S)} := \Big(\sum_{x_i\in S} f_i |\delta_e v(x_i)|^p\Big)^{1/p}.
\end{align*}
The main result of the paper, precisely given in Theorem \ref{thm:W2d}, is the estimate
\[
\|N_h u- u_h\|_{W^2_p(\Nhi)}\leq 
\begin{cases}
  C h^{1/p}   \quad &\mbox{if $p>d$,}
\\
C h^{1/d} \ln\left(\frac 1 h \right)^{1/d}  \quad &\mbox{if $p\leq d$,}
\end{cases}
\] 
where $N_h u$ denotes the nodal interpolant of $u$.
 Similar 
to the arguments in \cite{NochettoZhang17},
one of the tools we use is operator consistency of the method.
In addition, we extend the discrete
Alexandroff-Bakelman-Pucci estimates
given in \cite{KuoTrudinger00,NochettoZhang17A},
and show that the contact set also contains
useful information about the second-order differences.

Because of its wide array
of applications in e.g., differential geometry,
optimal mass transport, and meteorology,
several numerical methods
have been developed for the Monge-Amp\`ere problem.
These include the monotone finite difference
schemes \cite{Oberman08,FroeseOberman11,Benamou16,Mirebeau15},
the vanishing moment method \cite{FengNeilan09}, $C^1$ finite element
methods \cite{Bohmer08,Awanou15b},
$C^0$ penalty methods \cite{Brenner11,Neilan14,Awanou15},
and semi-Lagrangian schemes \cite{FengJensen17}.
We also refer the interested reader to a review of numerical methods for fully nonlinear elliptic equations \cite{NeilanSalgadoZhang}.
One application of our results is
to feed the solution of the Oliker-Prussner method
into a higher-order scheme.  For example, 
the results given in \cite{Neilan14} state
that Newton's method converges
to the discrete solution provided that difference
between the initial guess and the exact
solution is sufficiently small in a $W^2_p$-norm.
Therefore, we show that the solution of the 
Oliker-Prussner scheme can be used as an initial guess
within a higher-order scheme.
We will explore this idea in a coming paper.

The organization of the paper is as follows.
In the next section, we state the Oliker-Prussner method
and state some preliminary results.
In Section \ref{sec:Consistency}
we give operator consistency results
of the scheme.  Section \ref{sec:Stability} gives 
stability results with respect to the second-order difference operators,
and in Section \ref{S:W21estimate}
we provide $W^{2}_p$ error estimates.
Finally, we end the paper with some numerical
experiments in
Section \ref{S:numerics}.

%
\section{Preliminaries}
%
\subsection{Nodal Set and Nodal Function}
Let $\Nh$ be a set of nodes in the domain $\bar\dm$. We denote the set of interior nodes $\Nhi := \Nh \cap \dm$,
the set of boundary nodes 
$\Nhb := \Nh \cap \bdry$, and the nodal set 
\[
\Nh = \Nhi \cup \Nhb.
\]
To ensure that the interior node is not too close to the boundary $\partial \dm$, we require that 
\begin{align}\label{nodalassumption}
\textrm{dist}(z, \partial \dm) \geq \frac h 2 \quad \mbox{for any nodes $z \in \Nhi$}
\end{align}
Such a nodal set can be obtained by removing the nodes whose distance to $\partial \Omega$ is less than $h/2$. 
We assume that the nodal set is {\it translation invariant}, i.e., 
there exist a point $b \in \bbR^d$ and a basis $\{ e_i \}_{i=1}^d$ 
in $\bbR^d$ such that any interior node $z \in \Nhi$ can be written as
\begin{align}\label{translationinvariant}
  z = b + \sum_{i=1}^d h z_i e_i 
  \quad \mbox{for some integers $z_i \in \mathbb{Z}$.}
\end{align}
Since the basis $e_i$ can be transformed into the canonical basis in $\mathbb{R}^d$ under a linear transformation, hereafter to simplify the presentation, we will assume that 
$\Nhi = b + h \mathbb Z ^d $. 
We also make the following additional assumption on the boundary nodal set $\Nhb$:
\begin{align}
  \dist(x,\Nhb) \leq h,\qquad  \forall x\in \p \Omega.
\end{align} 
We say the nodal spacing of $\Nh$ is $h$.
It is worth mentioning that one can construct a translation invariant $\Nh$ on a curved domain $\dm$.
In fact, for a nodal set $\Nh$ to be translation invariant, we only require the interior nodal set $\Nhi$ satisfies \eqref{translationinvariant}, while no such requirement is made on the boundary nodes.

Associated with the nodes 
is a simplicial triangulation $\mathcal{T}_h$,
with vertices $\Nh$. We denote
by $h_T$ the diameter of $T\in \mct$,
and by $\rho_T$ the diameter of the largest
inscribed ball in $T$.  We assume that  
that the triangulation is shape-regular, i.e., there
exists $\sigma>0$ such that
\begin{align*}
\frac{h_T}{\rho_T}\le \sigma\qquad \forall T\in \mct.
\end{align*}

We denote by $\{\phi_i\}_{i=1}^n$,
with $n = \# \Nhi$, the canonical piecewise linear
hat functions associated with $\mathcal{T}_h$.
Namely, the function $\phi_i\in C(\bar\Omega)$
is a piecewise linear polynomial with respect to $\mathcal{T}_h$,
and is uniquely determined by the condition $\phi_i(x_j) = \delta_{i,j}$ (Kronecker delta)
for all $x_j\in \Nhi$ and $\phi_i(x_j)=0$ for all $x_j\in \Nhb$.
We denote by $\omega_i$ the support of $\phi_i$, i.e.,
the patch of elements in $\mathcal{T}_h$ that have $x_i$
as a vertex.

A function defined on $\Nh$ is called a nodal function,
and we denote the space of nodal functions by $\mathcal{M}_h$.
For a nodal function $g$ with nodal value $\{g_i\}_{x_i\in \Nh}$,
and for a subset of nodal points $\calC \subset \Nh$,
we set the discrete $\ell^d$ norm 
as
\begin{align*}
  \|g\|_{\ell^d(\calC)}:=\Big(\sum_{x_i\in \calC} |g_i|^d \Big)^{1/d}.
\end{align*}
%
%
We say that a nodal function $u_h\in \mathcal{M}_h$ is convex 
if, for all $x_i\in \Nhi$, there exists a supporting hyperplane
$L$ of $u_h$, i.e.,
\[
L(x_j)\le u_h(x_j)\quad \forall x_j\in \Nh\text{ and } L(x_i) = u(x_i).
\]
The convex envelope of $u_h$ is
the function $\Gamma (u_h)\in C(\bar\Omega)$ given by
\[
\Gamma(u_h)(x) = \sup_{L} \{L(x)\text{ is affine}:\ L(x_i)\le u_h(x_i)\ \forall x_i\in \Nh\}.
\]
Finally, we denote by $N_h:C(\bar\Omega)\to \mathcal{M}_h$
the nodal interpolant satisfying $N_h v(x_i) = v(x_i)$
for all $x_i\in \Nh$.  It is easy to see
that if $v$ is a convex function on $\bar\Omega$,
then $N_h v$ is a convex nodal function.

\subsection{The Oliker-Prussner Method}\label{sec:Method}

\begin{figure}
\begin{center}
  \includegraphics[scale=0.2]{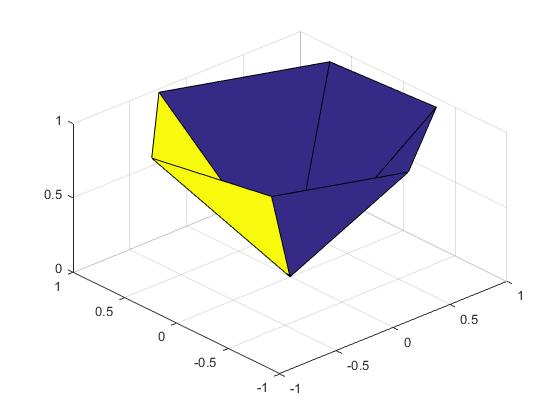} 
  \includegraphics[scale=0.2]{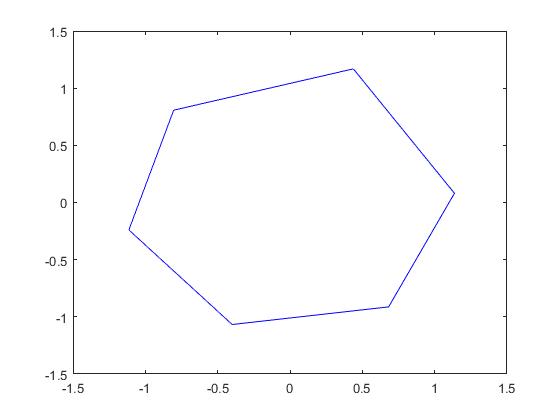} 
\\
  \includegraphics[scale=0.2]{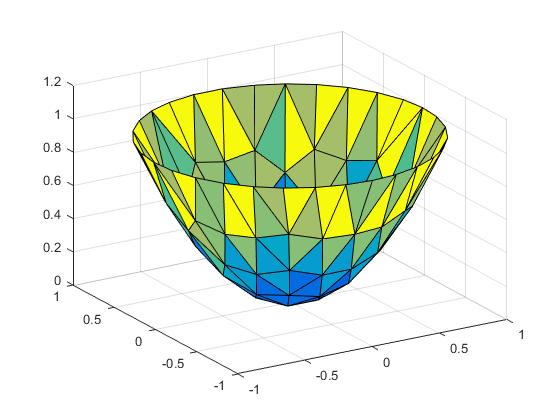} 
  \includegraphics[scale=0.2]{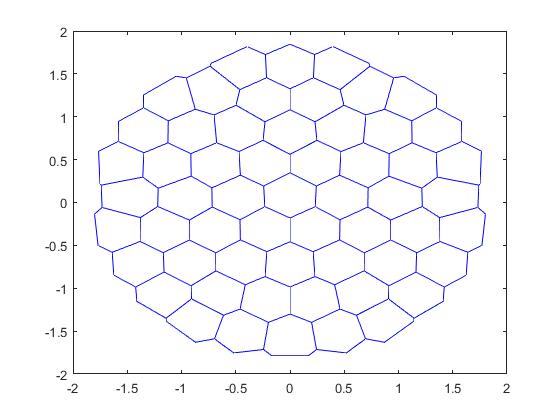} 
\caption{
A convex nodal function $u_h$ induces a convex piecewise linear function $\gamma_h=\Gamma(u_h)$.
The sub-differential $\partial u_h(0)$ of the convex
nodal function $u_h$ at node $0$ is the convex hull of the piecewise gradients $\nabla \gamma_h|_{T}$, which is the polygon in the second figure. 
Let the domain $\dm$ be a unit ball centered at $0$ and $\Nh$ be a nodal set in $\dm$. 
A convex nodal function $u_h$ defined on $\Nh$ induces a piecewise linear function $\Gamma(u_h)$.
For each node $x_i\in \Nh$, there is an associated subdifferential $\partial u_h(x_i)$ which corresponds to a polygon cell in the last figure. 
The piecewise gradient of $u_h$ can be viewed as a map between the domain $\dm$ and the diagram.
}
\label{fig:subdifferential}
\end{center}
\end{figure}

To motivate the method introduced
in \cite{OlikerPrussner88}, we first
introduce the notion of an Alexandroff solution
to the \MA equation \eqref{MA}.
To this end, note that if the solution to \eqref{MA}
is strictly convex, and if $u\in C^2(\Omega)$,
then a change of variables reveals  that
\[
\int_E f\, dx = \int_E \det(D^2 u)\, dx = \int_{\nab u(E)} dx = |\nab u(E)|\quad \text{for all Borel }E\subset \Omega,
\]
where $|\nab u(E)|$ denote the $d$-dimensional Lebesgue measure of $\nab u(E)
=\{\nab u(x):\ x\in E\}$.
To extend this identity to a larger class of functions, 
we introduce the subdifferential 
of the function $u$ at the point $x_0$ as
\begin{align*}
  \p u(x_0) = \{p\in \bbR^d:\ u(x)\ge u(x_0)+p\cdot (x-x_0) \quad \forall x \in \dm\}.
\end{align*}
Thus, $\p u(x_0)$ is the set of supporting hyperplanes
of the graph of $u$ at $x_0$.  If 
$u$ is strictly convex and smooth then $\p u(x_0) = \{\nab u(x_0)\}$,
and the same calculation as above shows that
\begin{align}
\label{eqn:AlexMotivation}
\int_E f\, dx =  |\p  u(E)|\quad \text{for all Borel }E\subset \Omega.
\end{align}
\begin{definition}
A convex function $u\in C(\bar\Omega)$
is an {\em Alexandroff  solution} to \eqref{MA}
provided that $u=0$ on $\p\Omega$
and \eqref{eqn:AlexMotivation} is satisfied.
\end{definition}

The method
introduced in \cite{OlikerPrussner88}
simply poses this solution concept
onto the space of nodal functions.
To do so, the definition of the subdifferential
is extended to the spaces of nodal functions
in the natural way:
\begin{align}\label{def:subdifferential}
\p u_h(x_i) = \{p\in \mathbb{R}^d:\ u(x_j)\ge u_h(x_i)+p\cdot (x_j-x_j)\ \forall x_j\in \Nh\}.
\end{align}

To characterize the sub-differential of a nodal function $u_h$, we note that the convex envelope of a convex nodal function $u_h$, which is a piecewise linear function defined in $\dm$, induces a mesh $\tilde{\mathcal{T}}_h$; see Figure \ref{fig:subdifferential}.
Then the sub-differential of $u_h$ at node $x_i$ can be characterized as the convex hull of the constant gradients
$
\gradv \Gamma(u_h)|_T $
for all $T \in \tilde{\mathcal{T}}_h$ which contain $x_i$; see Figure \ref{fig:subdifferential}.

The discrete method
is to find a convex nodal 
function $u_h$ with $u_h=0$
on $\Nhb$ and
\begin{align}\label{eqn:OPMethod}
|\p u_h(x_i)| = f_i\qquad \forall x_i\in \Nhi,
\end{align}
where
\begin{align}\label{eqn:fiDef}
f_i = \int_{\Omega} f(x) \phi_i(x)\, dx = \int_{\omega_i} f(x) \phi_i(x)\, dx.
\end{align}
\begin{remark}
Existence and uniqueness 
of a solution to \eqref{eqn:OPMethod}
is given in {\rm\cite{OlikerPrussner88,NochettoZhang17}}.
\end{remark}


\subsection{Brunn Minkowski inequality and subdifferential of convex functions}\label{sub:BM}

In this subsection, we develop a few techniques which will be useful in establishing the error estimate.
We start with the celebrated Brunn Minkowski inequality which relates the volumes of compact sets of $\mathbb R^d$.

\begin{prop}[Brunn Minkowski inequality]\label{BM}
  Let $A$ and $B$ be two nonempty compact subsets of $\mathbb R^d$ for $d \geq 1$. Then the following inequality holds:
\[
  |A + B|^{1/d} \ge |A|^{1/d} + |B|^{1/d},
\]
where $A+B$ denotes the Minkowski sum:
\[
  A + B := \{ v + w \in \mathbb{R}^d:  v \in A \text{ and } w \in B \}.
\]
\end{prop}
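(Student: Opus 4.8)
The plan is to establish the inequality by the classical induction of Hadwiger and Ohmann, reducing the general case to boxes through two successive approximations. First I would dispose of the degenerate cases: if $|A|=0$ or $|B|=0$ the claim is trivial, since for any $a\in A$ one has $a+B\subset A+B$ and hence $|A+B|\ge|B|$, and symmetrically; so from now on $|A|,|B|>0$. For the base case, suppose $A=\prod_{i=1}^d[0,a_i]$ and $B=\prod_{i=1}^d[0,b_i]$ are axis-aligned boxes. Then $A+B=\prod_{i=1}^d[0,a_i+b_i]$, and dividing the desired inequality by $|A+B|^{1/d}$ turns it into
\[
\Big(\prod_{i=1}^d\tfrac{a_i}{a_i+b_i}\Big)^{1/d}+\Big(\prod_{i=1}^d\tfrac{b_i}{a_i+b_i}\Big)^{1/d}\le 1,
\]
which follows from the arithmetic--geometric mean inequality applied to each product, because the two resulting arithmetic means add up to $\tfrac1d\sum_{i=1}^d\big(\tfrac{a_i}{a_i+b_i}+\tfrac{b_i}{a_i+b_i}\big)=1$.

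Next I would prove the inequality when $A$ and $B$ are each a finite union of boxes with pairwise disjoint interiors, by induction on the total number of boxes. If, say, $A$ consists of at least two boxes, I pick a coordinate hyperplane $\{x_j=t\}$ separating two of them and set $A^\pm=A\cap\{\pm(x_j-t)\ge0\}$; each $A^\pm$ is again a finite union of boxes, with strictly fewer boxes than $A$. Then I choose a parallel hyperplane $\{x_j=s\}$ cutting $B$ into $B^\pm=B\cap\{\pm(x_j-s)\ge0\}$ with matching volume fractions $|B^\pm|/|B|=|A^\pm|/|A|$; this is possible by the intermediate value theorem, since $s\mapsto|B\cap\{x_j\ge s\}|$ is continuous and decreases from $|B|$ to $0$, and each $B^\pm$ is a finite union of at most as many boxes as $B$. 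Since $A^\pm+B^\pm$ lie in the closed half-spaces $\{\pm(x_j-t-s)\ge0\}$, which have disjoint interiors, the inductive hypothesis gives
\[
|A+B|\ge|A^-+B^-|+|A^++B^+|\ge\big(|A^-|^{1/d}+|B^-|^{1/d}\big)^d+\big(|A^+|^{1/d}+|B^+|^{1/d}\big)^d,
\]
and substituting the common ratio $r:=|B|/|A|=|B^\pm|/|A^\pm|$ collapses the right-hand side to $(|A^-|+|A^+|)\,(1+r^{1/d})^d=|A|\,(1+r^{1/d})^d=(|A|^{1/d}+|B|^{1/d})^d$.

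For arbitrary nonempty compact $A,B$, I would approximate from outside: let $A_n$ (resp.\ $B_n$) be the union of those closed cubes of side $2^{-n}$ in the lattice $2^{-n}\mathbb{Z}^d$ that intersect $A$ (resp.\ $B$). These are finite unions of boxes with disjoint interiors; they decrease to $A$ and $B$ (both being closed), so $|A_n|\to|A|$ and $|B_n|\to|B|$. Moreover $A_n+B_n\subset(A+B)+\bar B(0,2\sqrt d\,2^{-n})$, and since $A+B$ is compact, $|(A+B)+\bar B(0,\delta)|\to|A+B|$ as $\delta\to0^+$. Applying the previous step to $A_n,B_n$ and letting $n\to\infty$ yields $|A+B|\ge(|A|^{1/d}+|B|^{1/d})^d$, which is the claim.

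The main obstacle is the combinatorial bookkeeping in the finite-union step: one must verify carefully that a separating coordinate hyperplane exists whenever $A$ (or $B$) has at least two boxes, that each half of the split set is again a finite union of boxes with a strictly smaller total count, and that the matching-fraction cut of the other set can indeed be realized, so that the induction is well-founded. The box base case and the final approximation are comparatively routine, the former being a one-line consequence of AM--GM and the latter using only compactness of $A+B$ and continuity of Lebesgue measure along monotone sequences. Alternatively, one could derive the inequality from the Pr\'ekopa--Leindler inequality, but the argument above is more self-contained.
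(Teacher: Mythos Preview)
Your argument is the classical Hadwiger--Ohmann proof and is correct in all essentials: the AM--GM step for boxes, the induction on the total number of boxes via a separating coordinate hyperplane with matched volume fractions, and the outer approximation by dyadic cubes all go through as you describe. The one place worth spelling out more carefully is the claim that each $A^\pm$ has strictly fewer boxes than $A$: the point is that each original box intersects a closed half-space in at most one box, and the box lying entirely on the opposite side contributes nothing full-dimensional, so the count drops by at least one. With that made explicit the induction is well-founded.

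As for comparison with the paper: the paper does not prove this proposition at all. It is quoted as the ``celebrated Brunn--Minkowski inequality'' and used as a black box (specifically, it is combined with Lemma~\ref{lem:addSubDiff} to obtain Lemma~\ref{lem:convexity_subdifferential}). So your write-up goes well beyond what the paper provides; it supplies a complete, self-contained proof where the paper simply cites the result.
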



Next, we make the following observation on the sum of two subdifferential sets. 
\begin{lemma}[Lemma 2.3 in \cite{NochettoZhang17}]\label{lem:addSubDiff}
Let $u_h$ and $v_h$
be two convex nodal functions.
Then there holds
\begin{align*}
\p u_h(x_i) + \p v_h(x_i)\subset \p (u_h+v_h)(x_i)
\end{align*}
for all $x_i \in \Nhi$. 
\end{lemma}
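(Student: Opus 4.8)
The plan is to unwind the definition of the discrete subdifferential \eqref{def:subdifferential} and exploit the fact that an inequality of ``supporting hyperplane'' type is preserved under addition. Fix an interior node $x_i\in\Nhi$, and let $\xi\in\p u_h(x_i)+\p v_h(x_i)$ be arbitrary. By the definition of the Minkowski sum we may write $\xi=p+q$ with $p\in\p u_h(x_i)$ and $q\in\p v_h(x_i)$. The goal is then to verify directly that $p+q\in\p(u_h+v_h)(x_i)$, which, since $\xi$ was arbitrary, yields the claimed inclusion.

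To carry this out, I would invoke \eqref{def:subdifferential} for each of $u_h$ and $v_h$ at the node $x_i$, tested against \emph{every} node $x_j\in\Nh$:
\begin{align*}
u_h(x_j)&\ge u_h(x_i)+p\cdot(x_j-x_i)\qquad\forall x_j\in\Nh,\\
v_h(x_j)&\ge v_h(x_i)+q\cdot(x_j-x_i)\qquad\forall x_j\in\Nh.
\end{align*}
Adding the two inequalities nodewise gives
\begin{align*}
(u_h+v_h)(x_j)\ge (u_h+v_h)(x_i)+(p+q)\cdot(x_j-x_i)\qquad\forall x_j\in\Nh,
\end{align*}
which is precisely the statement that $p+q$ belongs to $\p(u_h+v_h)(x_i)$ according to \eqref{def:subdifferential}. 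This completes the argument.

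There is honestly no serious obstacle here: the result is an immediate consequence of the linearity of affine functions, together with the fact that the order relation on $\bbR$ is preserved by addition. The only point that merits a word of care is that the discrete subdifferential is defined by testing against the \emph{same} finite node set $\Nh$ for all three functions $u_h$, $v_h$, and $u_h+v_h$, so the identical collection of test nodes $x_j$ appears in each of the three inequalities above; this is what makes the nodewise addition legitimate. (One should also note the harmless typo in \eqref{def:subdifferential}, where $p\cdot(x_j-x_j)$ is meant to read $p\cdot(x_j-x_i)$.)
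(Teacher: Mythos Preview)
Your proof is correct and follows essentially the same approach as the paper's own proof: pick $p\in\p u_h(x_i)$ and $q\in\p v_h(x_i)$, write out the two supporting-hyperplane inequalities from \eqref{def:subdifferential}, add them, and read off that $p+q\in\p(u_h+v_h)(x_i)$. Your additional remarks about the common test set $\Nh$ and the typo in \eqref{def:subdifferential} are accurate but not needed for the argument.
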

\begin{proof}
Let $p_1$ and $p_2$ be in $\partial u_h(x_i)$ and $\partial v_h(x_i)$, respectively. By the definition of subdifferential \eqref{def:subdifferential}, we have 
\begin{align*}
  p_1 \cdot (x_j - x_i) \leq & u_h(x_j) - u_h(x_i) \quad \forall x_j \in \Nh,
  \\
  p_2 \cdot (x_j - x_i) \leq & v_h(x_j) - v_h(x_i) \quad \forall x_j \in \Nh.
\end{align*}
Adding both inequalites, we obtain 
\[
  (p_1 + p_2) \cdot (x_j - x_i) \leq  (u_h + v_h) (x_j) - (u_h + v_h)(x_i) \quad \forall x_j \in \Nh.
\]
This shows that $p_1 + p_2 \in \p (u_h + v_h)(x_i)$.
\end{proof}

Combining both estimates, we derive the following result.
\begin{lemma}\label{lem:convexity_subdifferential}
Let $u_h$ and $v_h$ be two convex nodal functions defined on $\Nh$ and $\calC_h$ be the lower contact set of $(u_h - v_h)$:
\[
\calC_h:=\big\{x_i\in \Nhi:\ \Gamma(u_h-v_h)(x_i) = (u_h-v_h)(x_i)\big\}.
\]
 Then for any node $x_i \in \calC_h$,
\begin{align}\label{convexity_subdifferential}
  |\p \Gamma (u_h - v_h)(x_i)|^{1/d} \leq  |\p u_h(x_i)|^{1/d} -  |\p v_h(x_i)|^{1/d}. 
\end{align}
\end{lemma}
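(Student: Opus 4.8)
The plan is to exploit the containment relation in Lemma~\ref{lem:addSubDiff} together with the Brunn--Minkowski inequality (Proposition~\ref{BM}), applied to a clever decomposition of $u_h$ on the contact set. The key algebraic identity is $u_h = (u_h - v_h) + v_h$, so if we can show that the subdifferential of $\Gamma(u_h - v_h)$ and the subdifferential of $v_h$ add up (in the Minkowski sense) inside the subdifferential of $u_h$ at a contact node, we are done: applying Brunn--Minkowski gives
\[
|\p u_h(x_i)|^{1/d} \ge |\p\Gamma(u_h-v_h)(x_i) + \p v_h(x_i)|^{1/d} \ge |\p\Gamma(u_h-v_h)(x_i)|^{1/d} + |\p v_h(x_i)|^{1/d},
\]
which rearranges to \eqref{convexity_subdifferential}.

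First I would record the crucial consequence of $x_i$ lying in the lower contact set $\calC_h$: the convex envelope $\Gamma(u_h - v_h)$ agrees with $u_h - v_h$ at $x_i$, and since $\Gamma(u_h-v_h) \le u_h - v_h$ everywhere on $\Nh$ by definition of the convex envelope, any supporting hyperplane $L$ of $\Gamma(u_h-v_h)$ at $x_i$ satisfies $L(x_j) \le \Gamma(u_h-v_h)(x_j) \le (u_h - v_h)(x_j)$ for all $x_j \in \Nh$ and $L(x_i) = (u_h-v_h)(x_i)$. In other words, $\p\Gamma(u_h-v_h)(x_i) \subset \p(u_h - v_h)(x_i)$ when $x_i \in \calC_h$; this is where the contact-set hypothesis is used, and it is the one nontrivial point. (Note that $u_h - v_h$ need not itself be a convex nodal function, so we genuinely need to pass through the convex envelope, and the containment into $\p(u_h-v_h)(x_i)$ only holds at contact nodes.)

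Next I would combine this with Lemma~\ref{lem:addSubDiff}. Strictly, Lemma~\ref{lem:addSubDiff} is stated for two convex nodal functions, and $u_h - v_h$ may not be convex; but $\Gamma(u_h - v_h)$ is convex, and it agrees with $u_h - v_h$ at the contact node $x_i$, so the same additivity argument (adding the two defining inequalities for $p_1 \in \p\Gamma(u_h-v_h)(x_i)$ and $p_2 \in \p v_h(x_i)$, using $\Gamma(u_h-v_h)(x_j) \le (u_h - v_h)(x_j)$ for the first and equality at $x_i$) yields directly
\[
\p\Gamma(u_h - v_h)(x_i) + \p v_h(x_i) \subset \p u_h(x_i)
\]
for $x_i \in \calC_h$. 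Then taking $d$-dimensional Lebesgue measures, using monotonicity of measure under inclusion, raising to the power $1/d$, and invoking Brunn--Minkowski on the two compact convex sets $\p\Gamma(u_h-v_h)(x_i)$ and $\p v_h(x_i)$ finishes the proof. The main obstacle is conceptual rather than computational: one has to be careful that all the subdifferential manipulations are done at a single contact node and that the non-convexity of $u_h - v_h$ is handled by systematically replacing it with its convex envelope, which is legitimate precisely because of the contact condition $\Gamma(u_h-v_h)(x_i) = (u_h-v_h)(x_i)$.
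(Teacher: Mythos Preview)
Your proposal is correct and follows essentially the same route as the paper: establish $\p\Gamma(u_h-v_h)(x_i)\subset \p(u_h-v_h)(x_i)$ at contact nodes, add $\p v_h(x_i)$ via the additivity of subdifferentials (Lemma~\ref{lem:addSubDiff}) to land inside $\p u_h(x_i)$, and then apply Brunn--Minkowski. Your extra care in noting that Lemma~\ref{lem:addSubDiff} is stated for convex nodal functions while $u_h-v_h$ need not be convex is well placed; the paper simply invokes the lemma, but as you observe, its proof is just the sum of the two supporting-hyperplane inequalities and does not actually use convexity.
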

\begin{proof}
The proof of this result
is implicitly given in \cite[Proposition 4.3]{NochettoZhang17},
but we give it here for completeness.

The definition of the convex envelope 
and the subdifferential shows that
\begin{align*}
\p \Gamma (u_h-v_h)(x_i)\subset \p (u_h-v_h)(x_i)
\end{align*}
for all $x_i\in \Ceb$. Applying Lemma \ref{lem:addSubDiff}
then yields
\begin{align*}
\p v_h(x_i) + \p \Gamma(u_h-v_h)(x_i)\subset \p v_h(x_i)+\p (u_h-v_h)(x_i) \subset \p u_h(x_i).
\end{align*}
An application of the Brunn-Minkowski inequality (cf.~Lemma \ref{BM})
gets
\begin{align*}
|\p v_h(x_i)|^{1/d} + |\p \Gamma (u_h-v_h)(x_i)|^{1/d}
&\le |\p v_h(x_i)+ \p \Gamma(u_h-v_h)(x_i)|^{1/d}\\
&\le |\p u_h(x_i)|^{1/d}.
\end{align*}
Rearranging terms we obtain \eqref{convexity_subdifferential}.
\end{proof}

%

We also note that the numerical method \eqref{eqn:OPMethod} has a discrete comparison principle. Here, we refer to \cite{NochettoZhang17} for a  proof.

\begin{lemma}[discrete comparison principle, Corollary 4.4 in \cite{NochettoZhang17}]\label{lem:discrete_compare}
Let $v_h,w_h\in \mathcal{M}_h$ satisfy $v_h(x_i) \ge w_h(x_i)$
for all $x_i\in \Nhb$ and $|\p v_h(x_i)|\le |\p w_h(x_i)|$
for all $x_i\in \Nhi$.  Then
\[
v_h(x_i)\ge w_h(x_i)\qquad \forall x_i\in \Nh.
\]
\end{lemma}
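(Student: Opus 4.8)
The plan is to argue by contradiction, using a strictly convex perturbation to upgrade the qualitative hypothesis $|\partial v_h(x_i)|\le|\partial w_h(x_i)|$ into a strict inequality at a node where $w_h-v_h$ would attain a positive interior maximum. Throughout I take $v_h$ and $w_h$ to be convex nodal functions, which is the relevant setting and guarantees that the subdifferentials below are nonempty and compact. Suppose the conclusion fails, so that $M:=\max_{x_i\in\Nh}\big(w_h(x_i)-v_h(x_i)\big)>0$; since $w_h\le v_h$ on $\Nhb$, this maximum is attained only at interior nodes. Fix a ball $B_R(c)$ containing $\bar\dm$ and let $g_h:=N_h\Phi$ be the nodal interpolant of $\Phi(x):=|x-c|^2-R^2$. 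Then $g_h$ is a convex nodal function with $g_h\le 0$ on $\bar\dm\supset\Nhb$, and the crucial property I will establish is that there is a constant $\delta>0$, depending only on $R$, $d$, and the shape-regularity constant of $\Th$, such that $\partial g_h(x_i)$ contains a Euclidean ball of radius $\delta h$ for every interior node $x_i$.

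Granting this, fix a small $\eta>0$ and set $\tilde w_h:=w_h+\eta g_h$, which is again convex. On $\Nhb$ we have $\tilde w_h\le w_h\le v_h$, while at a node $x_0$ realizing $M$ we have $(\tilde w_h-v_h)(x_0)=M+\eta g_h(x_0)>0$ once $\eta$ is small enough; hence $\tilde w_h-v_h$ attains a positive maximum at some interior node $x_1\in\Nhi$. From $v_h(x_j)-v_h(x_1)\ge \tilde w_h(x_j)-\tilde w_h(x_1)$ for all $x_j\in\Nh$ it follows that every supporting plane of $\tilde w_h$ at $x_1$ supports $v_h$ at $x_1$, i.e.\ $\partial\tilde w_h(x_1)\subset\partial v_h(x_1)$. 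On the other hand, Lemma~\ref{lem:addSubDiff} gives $\partial w_h(x_1)+\eta\,\partial g_h(x_1)\subset\partial\tilde w_h(x_1)$, and $\eta\,\partial g_h(x_1)$ contains a ball of radius $\eta\delta h$. Combining these inclusions with translation invariance of Lebesgue measure and the Brunn--Minkowski inequality (Proposition~\ref{BM}) yields
\[
|\partial v_h(x_1)|^{1/d}\ \ge\ |\partial\tilde w_h(x_1)|^{1/d}\ \ge\ |\partial w_h(x_1)|^{1/d}+\omega_d^{1/d}\,\eta\delta h\ >\ |\partial w_h(x_1)|^{1/d},
\]
with $\omega_d$ the volume of the unit ball in $\bbR^d$. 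This contradicts $|\partial v_h(x_1)|\le|\partial w_h(x_1)|$, and the lemma follows.

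The main obstacle is the uniform lower bound on $\partial g_h(x_i)$: one must show that the discrete subdifferential of the interpolated paraboloid is genuinely $d$-dimensional, with inradius at least $\delta h$ uniformly over all interior nodes, including those adjacent to $\Nhb$. Using the characterization of $\partial g_h(x_i)$ as the convex hull of the constant gradients of $\Gamma(g_h)$ over simplices meeting $x_i$ (note $\Gamma(g_h)=N_h\Phi$ since $\Phi$ is convex), together with the expansion $\Phi(x_j)-\Phi(x_i)=2(x_i-c)\cdot(x_j-x_i)+|x_j-x_i|^2$, one sees that these gradients spread out by amounts comparable to $h$ in every direction; making this quantitative requires the translation invariance \eqref{translationinvariant} and shape regularity of $\Th$, and is cleanest to isolate as a short auxiliary lemma. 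Everything else in the argument is a soft combination of the maximum-node property with convexity and the two inequalities already recorded in Section~\ref{sub:BM}.
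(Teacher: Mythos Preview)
The paper does not supply its own proof of this lemma; it simply cites \cite{NochettoZhang17}. So there is nothing in the present paper to compare against, but your argument is a correct and self-contained proof in the convex setting you restrict to (which is indeed the only case the paper ever uses).

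Your perceived ``main obstacle'' --- the uniform lower bound on the inradius of $\partial g_h(x_i)$ --- is in fact a two-line computation and requires neither shape regularity of $\Th$ nor the convex-hull-of-gradients characterization. Writing $p=2(x_i-c)+q$, the condition $p\in\partial g_h(x_i)$ unwinds (using $g_h(x_j)=\Phi(x_j)$) to
\[
|x_j-x_i|^2\ \ge\ q\cdot(x_j-x_i)\qquad\text{for all }x_j\in\Nh,
\]
which certainly holds whenever $|q|\le\min_{x_j\ne x_i}|x_j-x_i|$. By the lattice structure of $\Nhi$ this minimum is at least $h$ over interior nodes, and by assumption~\eqref{nodalassumption} it is at least $h/2$ over boundary nodes. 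Hence $\partial g_h(x_i)\supset B_{h/2}\big(2(x_i-c)\big)$ for every $x_i\in\Nhi$, so one may take $\delta=\tfrac12$. With this in hand, the remainder of your argument (the maximum at $x_1$ forces $\partial\tilde w_h(x_1)\subset\partial v_h(x_1)$, then Lemma~\ref{lem:addSubDiff} and Proposition~\ref{BM}) goes through verbatim.

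One small caveat: the lemma as stated in the paper does not assume $v_h,w_h$ convex. Your proof genuinely needs convexity of $w_h$ so that $\partial w_h(x_1)\neq\emptyset$ and Lemma~\ref{lem:addSubDiff} applies; this is harmless for every application in the paper but is worth making explicit.
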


%
\section{Consistency of the Oliker-Prussner method}\label{sec:Consistency}
%
In this section, we state
the consistency of the method 
\eqref{eqn:OPMethod} given in \cite[Lemma 5.3, Proposition 5.4]{NochettoZhang17}.
The result shows that
the relative consistency error
is of order $\mathcal{O}(h^2)$ away from
the boundary and of order $\mathcal{O}(1)$
in a $\mathcal{O}(h)$ region of the boundary.
%
\begin{lemma}\label{lem:QuadraticConsistency}
Let $\Nh$ be translation invariant nodal set defined on the domain $\Omega$.
If  $u\in C^{k,\alpha}(\bar{\Omega})$
is a convex function with $0 < \lambda I \le D^2 u\le \Lambda I$
and $2\le k+\alpha\le 4$, 
there holds, for ${\rm dist}(x_i,\p \Omega) \ge Rh$,
\begin{align}
\label{eqn:Wishful}
\big| |\p N_h u(x_i)| - f_i\big|
\le C h^{k+\alpha+d-2},
\end{align}
where $R$ depends on $\lambda$ and $\Lambda$.
Moreover, there holds for ${\rm dist}(x_i,\p \Omega)\le Rh$,
\[
\big|\p N_h u(x_i)-f_i\big|\le C h^d.
\]
\end{lemma}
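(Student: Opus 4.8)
The plan is to compare both $|\partial N_h u(x_i)|$ and $f_i$ to the single number $\det(D^2u(x_i))\,h^d=f(x_i)h^d$, exploiting the translation invariance of $\Nhi=b+h\bbZ^d$ (and, as we may assume here, of the triangulation, which we also take symmetric about the nodes).

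First I would record two consequences of this invariance. \emph{(i) Exactness on quadratics:} if $q$ is a quadratic with Hessian $B$, then $|\partial N_h q(z)|=\det(B)h^d$ for every node $z$ far from $\partial\Omega$. Indeed, translating the lifted point set $\{(z,q(z))\}$ by $he$, $e\in\bbZ^d$, only adds an affine function, which changes neither the lower-hull subdivision nor any difference of its gradients; so the polytopes $\{\partial N_h q(z)\}_z$ tile $\bbR^d$ up to a null set, and since there is one node per cell of volume $h^d$, a Minkowski-content count of $\nabla q$ over a large box gives $|\partial N_h q(z)|=\det(B)h^d$. \emph{(ii) The data term:} again by invariance $\int_\Omega\phi_i\,dx=h^d$ and the patch $\omega_i$ is centrally symmetric about $x_i$, so $\int_{\omega_i}(x-x_i)\phi_i\,dx=0$; since $f=\det(D^2u)\in C^{k-2,\alpha}(\bar\Omega)$, a Taylor expansion of $f$ at $x_i$ together with this moment identity yields $f_i=\int_{\omega_i}f\phi_i\,dx=f(x_i)h^d+\mathcal O(h^{k+\alpha+d-2})$.

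For the interior estimate, fix $x_i$ with $\dist(x_i,\partial\Omega)\ge Rh$. I would first prove a \emph{localization lemma}: after subtracting the tangent plane of $u$ at $x_i$, each constraint defining $\partial N_h u(x_i)$ from a node $x_j$ reads $(p-\nabla u(x_i))\cdot(x_j-x_i)\le u(x_j)-u(x_i)-\nabla u(x_i)\cdot(x_j-x_i)=\mathcal O(|x_j-x_i|^2)$, and uniform convexity of $u$ makes the far-node constraints redundant; hence for $R=R(\lambda,\Lambda)$ large enough $\partial N_h u(x_i)$ depends only on nodes in $B(x_i,Rh)$, lies in $B(\nabla u(x_i),Ch)$, and the lower-hull facets of $\Gamma(N_hu)$ at $x_i$ have diameter $\mathcal O(h)$. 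Now write $u=q+C+r$ on $B(x_i,Rh)$, with $q$ the osculating quadratic at $x_i$ (Hessian $A=D^2u(x_i)$), $C$ the cubic Taylor term (absent if $k<3$), and $\|r\|_{L^\infty(B(x_i,Rh))}\le Ch^{k+\alpha}$. Solving the linear systems (well conditioned by the localization lemma) that define the facet gradients, those of $N_hu$ differ from those of $N_h(q+C)$ by $\mathcal O(h^{k+\alpha-1})$ and those of $N_h(q+C)$ from those of $N_hq$ by $\mathcal O(h^2)$. Regarding $|\partial N_h\cdot(x_i)|$ as a piecewise-polynomial function of these facet-gradient positions: the $r$-perturbation changes it by at most $\mathcal O(h^{k+\alpha-1})\cdot\mathcal O(h^{d-1})=\mathcal O(h^{k+\alpha+d-2})$; the effect of the \emph{odd} cubic $C$ is centrally symmetric---reflecting $q+C$ through $x_i$ produces $q-C$ up to an affine function, so the grid symmetry forces $|\partial N_h(q+C)(x_i)|=|\partial N_h(q-C)(x_i)|$, killing the $\mathcal O(h^2)$ first-order contribution and leaving a second-order one of size $\mathcal O(h^{d+2})\le\mathcal O(h^{k+\alpha+d-2})$---and any combinatorial change (a flat facet of $q$ splitting into simplices) produces only a vertex-to-edge volume variation of order $\mathcal O(h^{k+\alpha+d-2})$ or less. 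With (i) this gives $|\partial N_hu(x_i)|=\det(A)h^d+\mathcal O(h^{k+\alpha+d-2})$, and since $\det A=f(x_i)$, combining with (ii) proves $\big||\partial N_hu(x_i)|-f_i\big|\le Ch^{k+\alpha+d-2}$.

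For $\dist(x_i,\partial\Omega)\le Rh$ it suffices that both quantities are $\mathcal O(h^d)$: $f_i\le\bar f\int_{\omega_i}\phi_i\le\bar f|\omega_i|\le Ch^d$, while the localization computation---using only that $u\in C^2$, that the boundary value $0$ is attained with a quadratic remainder, and that $\dist(x_i,\partial\Omega)\ge h/2$, so that $x_i$ sits well inside the convex hull of its neighbouring interior and boundary nodes---gives $\partial N_hu(x_i)\subset B(\nabla u(x_i),Ch)$ and hence $|\partial N_hu(x_i)|\le Ch^d$. The main obstacle I anticipate is the interior perturbation step: controlling the combinatorics of the lower hull $\Gamma(N_hu)$ near $x_i$ uniformly in $h$ and in the local geometry (that facets are $\mathcal O(h)$-sized and only nearby nodes are active), and turning the heuristic ``an odd perturbation changes a centrally symmetric polytope's volume only at second order'' into a quantitative estimate; establishing (i), and checking near $\partial\Omega$ that the truncated nodal set surrounds $x_i$ with uniform depth, are secondary points that still need care.
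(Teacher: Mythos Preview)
The paper does not prove this lemma; it merely states the result and cites \cite[Lemma~5.3, Proposition~5.4]{NochettoZhang17} for the proof. So there is no in-paper argument to compare against directly.

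That said, your outline tracks the strategy one finds in the cited reference: (a) exactness of $|\partial N_h q|$ on quadratics via the tiling/translation-invariance argument; (b) localization of $\partial N_h u(x_i)$ to an $\mathcal O(h)$-stencil using the uniform bounds $\lambda I\le D^2u\le\Lambda I$, which is precisely what fixes $R=R(\lambda,\Lambda)$; (c) Taylor expansion about $x_i$ and a perturbation estimate for the polytope volume. The central-symmetry cancellation of the odd (cubic) Taylor term is indeed the mechanism that upgrades the naive $\mathcal O(h^{d+1})$ bound to $\mathcal O(h^{d+2})$ and thereby reaches $\mathcal O(h^{k+\alpha+d-2})$ for the full range $2\le k+\alpha\le 4$.

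Two points deserve more care than you give them. First, your estimate for $f_i$ requires $\int_{\omega_i}(x-x_i)\phi_i\,dx=0$, which follows from central symmetry of the patch $\omega_i$ and of $\phi_i$ about $x_i$; translation invariance of $\Nhi$ alone does not guarantee this, and the paper makes no explicit symmetry assumption on $\mathcal T_h$. For the standard (Kuhn-type) triangulations of $h\mathbb Z^d$ the symmetry does hold, but you should state this as a hypothesis rather than slip it in parenthetically. Second, the step you correctly identify as the main obstacle---controlling the lower-hull combinatorics so that the facet gradients depend smoothly on the nodal values, and turning ``even function of an odd perturbation'' into a quantitative $\mathcal O(h^{d+2})$ bound---is nontrivial: one must show that for small $h$ the facet structure of $\Gamma(N_h u)$ at $x_i$ is determined by the quadratic part (hence stable), and treat separately the measure-zero degenerate configurations. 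This is exactly where the work in \cite{NochettoZhang17} lies, and your sketch does not yet supply it.
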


\begin{remark}
The regularity of $f$ and $\p\Omega$,
the strict convexity of $\Omega$,
and the positivity of $f$ guarantees
that the convex solution
to \eqref{MA} enjoys the regularity $u\in C^{k,\alpha}(\bar{\Omega})$.
For example, if $f\in C^{k-2,\alpha}(\bar\Omega)$
and $\Omega$ is smooth,
then the solutions satisfies $u\in C^{k,\alpha}(\bar\Omega)$ \cite{Gutierrez01,CafNirSpruck84,TrudingerWang08} 
\end{remark}

Thanks to the consistency error of the method, Lemma \ref{lem:QuadraticConsistency}, an $L_{\infty}$-error estimate is derived in \cite{NochettoZhang17} which states
\begin{proposition}\label{prop:LinftyEstimate}
Let $\Omega$ be uniformly convex
and $\Nhi$ be translation invariant.  
Suppose further that the boundary nodes satisfy \eqref{nodalassumption},
that $f\ge \underline{f}>0$, and that  the convex solution to \eqref{MA}
satisfies $u\in C^{k,\alpha}(\bar\Omega)$
for some $2\le k+\alpha\le 4$ and $0< \lambda I \leq D^2 u \leq \Lambda I$.  Then the numerical solution to the discrete \MA equation \eqref{eqn:OPMethod} satisfies
\begin{align*}
\|u_h-N_h u \|_{L_\infty(\Nh)}\le C h^{k+\alpha-2} \|u\|_{C^{k,\alpha}(\bar\Omega)},
\end{align*}
where
$
\|v_h\|_{L_\infty(\Nh)}:=\max_{x_i\in \Nh} |v_h(x_i)|.
$
\end{proposition}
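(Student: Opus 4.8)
The plan is to use the discrete comparison principle (Lemma \ref{lem:discrete_compare}) together with the consistency estimate (Lemma \ref{lem:QuadraticConsistency}) to sandwich $u_h - N_h u$ between two small barrier functions. First I would set $\eps := C_0 h^{k+\alpha-2}\|u\|_{C^{k,\alpha}(\bar\Omega)}$ for a constant $C_0$ to be chosen, and construct an auxiliary convex nodal function of the form $w_h^{\pm} := N_h u \pm \eps \, N_h \psi$, where $\psi$ is a fixed smooth, strictly convex function on $\bar\Omega$ (for instance $\psi(x) = |x|^2 - R_\Omega^2$ with $R_\Omega$ large enough that $\psi \le 0$ on $\bar\Omega$, so that $\psi$ vanishes or is negative on $\p\Omega$ and $D^2\psi = 2I$). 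The point of adding the multiple of $\psi$ is to create a definite gap in the Monge-Amp\`ere measures: using Lemma \ref{lem:convexity_subdifferential} (or, for the lower barrier, the superadditivity from Lemma \ref{lem:addSubDiff} via Brunn-Minkowski) one obtains, at interior nodes with $\mathrm{dist}(x_i,\p\Omega)\ge Rh$,
\[
|\p w_h^-(x_i)|^{1/d} \le |\p N_h u(x_i)|^{1/d} - c\,\eps^{1/d} h^{d/d}\cdot(\text{vol.\ factor}),
\]
so that $|\p w_h^-(x_i)| \le |\p N_h u(x_i)| - c'\eps\, h^d$ for a dimensional constant $c'$; comparing with the consistency bound $\big||\p N_h u(x_i)| - f_i\big| \le C h^{k+\alpha+d-2}$ and the definition of $\eps$, the $c'\eps h^d$ term dominates, giving $|\p w_h^-(x_i)| \le f_i = |\p u_h(x_i)|$.

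Next I would handle the boundary layer $\mathrm{dist}(x_i,\p\Omega) \le Rh$. There the consistency is only $\mathcal{O}(1)$ per node (Lemma \ref{lem:QuadraticConsistency}), so a uniform additive barrier is not enough; instead one exploits uniform convexity of $\Omega$. The standard device is to let $\psi$ behave like the signed distance-squared to $\p\Omega$ near the boundary, so that $N_h\psi$ is already $\mathcal{O}(h^2)$ small on the layer (since nodes are within $\mathcal{O}(h)$ of $\p\Omega$ by \eqref{nodalassumption} and the layer assumption), and simultaneously $\psi$ is strictly convex with $D^2\psi \ge c I$. Then the added term $\eps\, N_h\psi$ both (a) keeps $w_h^-\le N_h u$ and $w_h^- = N_h u$ on $\Nhb$ essentially, and (b) provides a large enough subdifferential increment in the bulk. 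One may need to take $\psi$ to be a fixed smooth strictly convex function that is $\le 0$ on $\bar\Omega$ and $=0$ only on $\p\Omega$ — uniform convexity of $\Omega$ guarantees such a $\psi$ exists with $\psi \le -c_1\,\mathrm{dist}(x,\p\Omega)$ and $\psi \ge -c_2\,\mathrm{dist}(x,\p\Omega)$ near $\p\Omega$. With this choice, on $\Nhb$ we have $w_h^-(x_i) = N_h u(x_i) + \eps\psi(x_i) \le N_h u(x_i) = u(x_i) = u_h(x_i)$, i.e.\ $w_h^- \le u_h$ on the boundary, while $|\p w_h^-(x_i)| \le |\p u_h(x_i)|$ at all interior nodes by the two cases above. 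The discrete comparison principle (Lemma \ref{lem:discrete_compare}) then forces $w_h^- \le u_h$ on all of $\Nh$, hence $u_h - N_h u \ge \eps\, N_h\psi \ge -C\eps$ on $\Nh$.

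The reverse inequality $u_h - N_h u \le C\eps$ is obtained symmetrically: set $w_h^+ := N_h u - \eps N_h\psi$ (now $w_h^+ \ge N_h u$ since $\psi \le 0$), check $|\p w_h^+(x_i)| \ge |\p N_h u(x_i)| + c'\eps h^d \ge f_i$ in the bulk using the same convexity-of-subdifferential estimate in the other direction, argue similarly in the boundary layer, note $w_h^+ \ge u_h$ on $\Nhb$, and apply the comparison principle with the roles reversed to get $u_h \le w_h^+ = N_h u - \eps N_h\psi \le N_h u + C\eps$. Combining the two bounds yields $\|u_h - N_h u\|_{L_\infty(\Nh)} \le C\eps = C h^{k+\alpha-2}\|u\|_{C^{k,\alpha}(\bar\Omega)}$ as claimed. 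The main obstacle I anticipate is the boundary-layer analysis: one must verify that the single fixed barrier $\psi$ simultaneously controls the $\mathcal{O}(1)$ consistency defect on the $\mathcal{O}(h)$ boundary strip — this is exactly where uniform convexity of $\Omega$ and the nodal-distance assumption \eqref{nodalassumption} enter, and getting the constants to line up (so that the $\eps$-increment in the Monge-Amp\`ere measure beats the consistency error in \emph{both} regimes with a single choice of $C_0$) requires some care. Everything else is a routine combination of Lemmas \ref{lem:convexity_subdifferential}, \ref{lem:discrete_compare}, and \ref{lem:QuadraticConsistency}.
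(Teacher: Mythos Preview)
The paper does not prove this proposition at all; it simply quotes the result from \cite{NochettoZhang17}.  So there is no in-paper proof to compare against, and the question is only whether your sketch is complete.

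Your overall architecture---build upper and lower barriers $N_h u \pm \eps\,N_h\psi$ with a fixed smooth strictly convex $\psi$ and invoke the comparison principle---is the natural one, and it works cleanly in the interior region $\mathrm{dist}(x_i,\partial\Omega)\ge Rh$.  There, by Lemma~\ref{lem:addSubDiff} and Brunn--Minkowski,
\[
|\p(N_hu+\eps N_h\psi)(x_i)|^{1/d}\ \ge\ |\p N_hu(x_i)|^{1/d}+\eps\,|\p N_h\psi(x_i)|^{1/d}
\ \ge\ |\p N_hu(x_i)|^{1/d}+c\,\eps h,
\]
while the consistency Lemma~\ref{lem:QuadraticConsistency} gives $\big|f_i^{1/d}-|\p N_hu(x_i)|^{1/d}\big|\le C h^{k+\alpha-1}$; so $\eps=C_0 h^{k+\alpha-2}$ suffices.

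The gap is precisely the boundary layer, and your proposed fix does not close it.  On nodes with $\mathrm{dist}(x_i,\partial\Omega)\le Rh$ the consistency bound is only $\big||\p N_hu(x_i)|-f_i\big|\le Ch^d$, which translates to $\big|f_i^{1/d}-|\p N_hu(x_i)|^{1/d}\big|\le Ch$.  For the comparison inequality to hold at such a node you would need $\eps h\gtrsim h$, i.e.\ $\eps\gtrsim 1$, and no smooth uniformly convex $\psi$ (signed-distance-squared or otherwise) changes this order: its Hessian is bounded, so $|\p N_h(\eps\psi)(x_i)|^{1/d}\sim \eps h$ uniformly in~$x_i$.  In short, a single smooth barrier with coefficient $h^{k+\alpha-2}$ cannot absorb an $\mathcal{O}(1)$ relative consistency defect at boundary-layer nodes.  (Your text also switches the sign convention for $w_h^-$ midway---first $w_h^-=N_hu-\eps N_h\psi$, later $w_h^-=N_hu+\eps\psi$---which obscures which direction of the comparison you are establishing.)

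What is actually needed, and what the argument in \cite{NochettoZhang17} supplies, is a barrier whose convexity is \emph{concentrated} at the boundary layer while its $L_\infty$ size remains $\mathcal{O}(h^{k+\alpha-2})$; the uniform convexity of $\Omega$ enters not merely to make $\psi\le 0$ but to manufacture enough subdifferential mass at nodes within $\mathcal{O}(h)$ of $\partial\Omega$.  The present paper hints at the mechanism in Section~\ref{S:W21estimate}, where the discrete barrier $b_h$ (constant $-h^2$ on $\Nhi$, zero on $\Nhb$) is used for exactly this purpose: it is $\mathcal{O}(h^2)$ in $L_\infty$ yet carries $\mathcal{O}(h^d)$ subdifferential measure at boundary-layer nodes and none elsewhere.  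A barrier of this discrete type---rather than the interpolant of a fixed smooth function---is what makes the boundary step go through.
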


We note that if $u \in C^{3,1}(\dm)$, then the optimal order of the $L_\infty$ error is $\mathcal{O}(h^2)$. 
By this $L_{\infty}$ error estimate and the assumption \eqref{nodalassumption} that the boundary node is at least $h/2$ away from the boundary, we immediately deduce that 
$
| \delta_e (N_h u - u_h) (x_i) | 
$
is bounded.
This observation will be useful in the following sections when we investigate the discrete $W^2_p$ error estimate. 

%
\section{Stability of the Oliker-Prussner method}\label{sec:Stability}
%

To derive the discrete $W^2_p$-estimate, we first make an observation that the contact 
set of a nodal function contains interesting information on its second order difference.

\begin{lemma}[estimate of second order difference]\label{lem:bound}
Given two convex nodal functions $v_h$ and $u_h$ defined on the nodal set $\Nh$, let 
\[
\whb = u_h - (1-\epsilon)v_h
\quad \mbox{and} \quad
\wha = v_h - (1-\epsilon)u_h
\]
for some $0< \epsilon \leq 1$
and the contact sets 
\begin{align}
\label{eqn:CebDef}
\Ceb := &\; \{ x_i \in \Nh, \quad \whb(x_i) = \Gamma \whb(x_i) \},
\\
\label{eqn:CeaDef}
\Cea := &\; \{ x_i \in \Nh, \quad \wha(x_i) = \Gamma \wha(x_i) \}.
\end{align}
If a node $x_i \in \Ceb \cap \Cea$, then  
\begin{align}\label{2ndorderestimate}
-\epsilon \delta_e v_h(x_i) \leq \delta_e (u_h - v_h) (x_i) \leq \frac{\epsilon}{1 - \epsilon}  \delta_e v_h (x_i)
\end{align}
for any vector $e \in \mathbb{Z}^d$.
\end{lemma}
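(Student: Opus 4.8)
The plan is to exploit the convexity of $w_\epsilon$ and $w^\epsilon$ on their respective contact sets together with the fact that, on a contact set, the convex envelope agrees with the nodal function not only at $x_i$ but forces a supporting-hyperplane inequality at the three collinear nodes $x_i - he$, $x_i$, $x_i + he$ (or the truncated variant). Concretely, if $x_i \in \Ceb$ then there is an affine function $L$ with $L(x_j) \le w_\epsilon(x_j)$ for all $x_j \in \Nh$ and $L(x_i) = w_\epsilon(x_i)$; evaluating at $x_i \pm he$ and adding the two inequalities, the linear terms cancel and we obtain $\delta_e w_\epsilon(x_i) \ge 0$, i.e. $w_\epsilon$ has nonnegative second-order difference at every contact node. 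The same argument on $\Cea$ gives $\delta_e w^\epsilon(x_i) \ge 0$. (For the truncated definition of $\delta_e$ with parameters $\rho_1,\rho_2$, the same cancellation works because the weights are precisely chosen so that the affine part drops out; this is the one place a short calculation is needed.)

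Next I would unwind the definitions of $w_\epsilon$ and $w^\epsilon$ and apply linearity of $\delta_e$. From $w_\epsilon = u_h - (1-\epsilon)v_h$ and $\delta_e w_\epsilon(x_i) \ge 0$ we get
\[
\delta_e u_h(x_i) \ge (1-\epsilon)\,\delta_e v_h(x_i),
\]
hence $\delta_e(u_h - v_h)(x_i) \ge -\epsilon\,\delta_e v_h(x_i)$, which is the left inequality in \eqref{2ndorderestimate}. From $w^\epsilon = v_h - (1-\epsilon)u_h$ and $\delta_e w^\epsilon(x_i) \ge 0$ we get $\delta_e v_h(x_i) \ge (1-\epsilon)\,\delta_e u_h(x_i)$, so $\delta_e u_h(x_i) \le \frac{1}{1-\epsilon}\,\delta_e v_h(x_i)$ and therefore $\delta_e(u_h - v_h)(x_i) \le \frac{\epsilon}{1-\epsilon}\,\delta_e v_h(x_i)$, the right inequality. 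Note $1-\epsilon > 0$ is needed only for the second step (if $\epsilon = 1$ the right bound is vacuous, and indeed $w^\epsilon = v_h$ is automatically convex).

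The main obstacle is really just bookkeeping rather than a deep difficulty: I must check that the supporting-hyperplane/contact argument is valid at the two neighboring nodes used to form $\delta_e$, including the boundary-adjacent case where one of $x_i \pm he$ falls outside $\Omega$ and the modified stencil with $\rho_1,\rho_2$ is used. In that case one uses the node $x_i + \rho_1 h e$ (or $x_i - \rho_2 h e$), which lies in $\bar\Omega$ but may only be a boundary point rather than a nodal point; however, the inequality $L(x) \le \Gamma w_\epsilon(x)$ extends to all of $\bar\Omega$ because $\Gamma w_\epsilon$ is the convex envelope, so $L(x_i \pm \rho h e) \le \Gamma w_\epsilon(x_i \pm \rho h e) \le w_\epsilon(x_i \pm \rho h e)$ still holds — one should state this extension explicitly. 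Once the sign of $\delta_e w_\epsilon$ and $\delta_e w^\epsilon$ is established at $x_i$, the rest is the two-line algebra above. I would therefore organize the proof as: (1) contact $\Rightarrow$ $\delta_e w \ge 0$ for a convex nodal function (both stencil cases); (2) apply to $w_\epsilon$ and $w^\epsilon$; (3) rearrange.
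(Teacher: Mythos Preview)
Your proposal is correct and follows essentially the same approach as the paper: the paper's proof is the terse observation that $x_i\in\Ceb$ implies $\delta_e\whb(x_i)\ge\delta_e\Gamma\whb(x_i)\ge 0$, then the identical linear rearrangement you wrote out, and symmetrically for $\Cea$. Your supporting-hyperplane argument is just the unpacked version of that one-line inequality, and you are in fact more careful than the paper in flagging the truncated-stencil case (which the paper does not address explicitly).
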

\begin{proof}
We observe that if a node is in the contact set $x_i \in \Ceb$, then the second order difference
of $\whb$ satisfies $\delta_e \whb(x_i) \ge \delta_e \Gamma \whb(x_i)  \ge 0$
for any vector $e\in \bbZ^d$.
Hence, for any node $x_i \in \Ceb$, we have
\begin{align}\label{lowerbound}
\delta_e (u_h - v_h) (x_i) \geq -\epsilon  \delta_e v_h (x_i). 
\end{align}
This inequality yields a lower bound of the second order difference. 

To derive the upper bound, we apply a similar argument above to the function $\wha$ and derive 
\[
\delta_e (v_h - u_h) (x_i) \geq -\epsilon  \delta_e u_h (x_i)
\]
for any node $x_i \in \Cea$.
A simple algebraic manipulation yields
\begin{align}\label{upperbound}
\delta_e (u_h - v_h) (x_i) \leq \frac{\epsilon}{1 - \epsilon}  \delta_e v_h (x_i). 
\end{align}
Combining both the lower bound \eqref{lowerbound} and upper bound \eqref{upperbound}, we obtain the desired estimate.
\end{proof}

\begin{remark}\label{rmk:observation}
The lemma above shows that we have control of the error $\delta_e (u_h - v_h)$ on the 
contact sets $\Ceb$ and $\Cea$. 
Define the set $E_{\tau}$ to be
\begin{align}\label{Et}
E_\tau = \left \{ x_i \in \Nh, \ \delta_e (v_h - u_h)(x_i) \geq \tau \delta_e v_h(x_i) 
\quad \mbox{for some vector $e \in \mathbb{Z}^d$} \right\},
\end{align}
where $\tau = \epsilon/(1 - \epsilon)$. 
Then the proof of Lemma \ref{lem:bound} shows 
that $E_\tau$ is contained in the non-contact set
\begin{align}
\label{eqn:Seps}
S_\epsilon := \Nh \setminus  \Ceb.
\end{align}
Analogously,
\begin{align*}
E^\tau :&= \left\{x_i\in \Nh,\ \delta_e(u_h-v_h)(x_i)\geq \tau \delta_e v_h(x_i)
\quad \mbox{for some vector $e \in \mathbb{Z}^d$} \right\}\\
&\subset S^\epsilon:=\Nh\setminus  \Cea. 
\end{align*}
\end{remark}


In the next step, we estimate the cardinality of $S_\epsilon$. 
Heuristically, if $\epsilon = 1$, then $\whb = u_h$ 
which is a convex nodal function,  and so we have $S_\epsilon = \emptyset$. 
As $\epsilon$ decreases to zero, the function $\whb$ 
becomes `less convex', and the cardinality $\#(S_\epsilon)$ increases;
see Figure \ref{fig:Remark1}.
Therefore, our next goal is to estimate how fast $\#(S_\epsilon)$ increases as $\epsilon \to 0$. 
The following lemma shows that this is controlled by the consistency error of the method.

\begin{figure}
\begin{center}
  \begin{tikzpicture}
\draw  [gray, thin] (1,0) -- (6,0);
\draw  [blue, very thick, domain=1:6] plot (\x, { (0.5*\x-0.5) * (0.5*\x-3) });
\draw (5, 1)  [above left]  node {$w_\epsilon(x) = u_h(x)$};
\draw  [gray, thin] (7,0) -- (12,0);
\draw  [blue, very thick, domain=7:12] plot (\x, {0.5* (0.5*\x-3.5) *(0.5 *\x - 4)* (0.5*\x-6) });
\draw  [dashed, very thick](7,0) -- (10, -0.75);
\draw [dashed, very thick, domain=10:12] plot (\x, {0.5* (0.5*\x-3.5) *(0.5 *\x - 4)* (0.5*\x-6) });
\draw (11, 1)  [above left]  node {$w_\epsilon(x) = u_h - \frac 12 v_h$};
\draw  (10,-1) [below left]  node {$\Gamma w_\epsilon(x)$};
\end{tikzpicture}
\end{center}
\caption{A pictorial description of Remark \ref{rmk:observation}}
\label{fig:Remark1}
\end{figure}
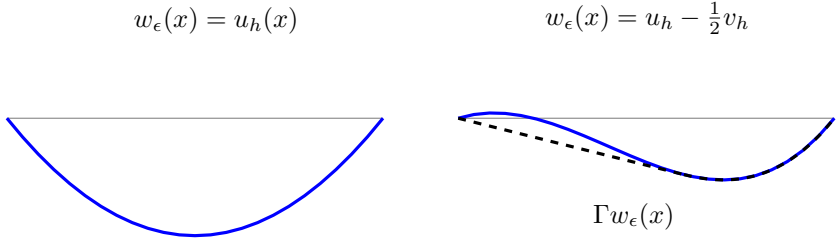


\begin{prop}\label{prop:measureestimate}
Let $u_h$ and $v_h$ be two convex nodal functions satisfying $u_h=v_h$ on $\Nhb$,
$u_h \le v_h$ in $\Nhi$, and
\begin{align}\label{eqn:uhvh}
| \partial u_h(x_i) | = f_i,
\quad \mbox{ and } \quad
| \partial v_h(x_i) | = g_i
\end{align}
for all $x_i \in \Nhi$.
For any subset $S \subset \Nhi$, let  
\begin{align}
\label{eqn:MuDef}
\mu (S) = \sum_{x_i \in S} f_i
\quad \mbox{and} \quad
\nu_{\tau} (S) = \sum_{x_i \in S} \left( f_i^{1/d} + \frac 1 {\tau} e_i^{1/d} \right)^d,
\end{align}
where $e_i^{1/d} = g_i^{1/d} - f_i^{1/d}$. 
Then 
\begin{align}\label{eqn:measureestimate}
\mu(S_\epsilon) \leq \nu_{\tau}(\Ceb) - \mu(\Ceb),
\end{align}
where $\Ceb$ is given by \eqref{eqn:CebDef},
$S_\epsilon$ is given by \eqref{eqn:Seps},
and $\tau = \epsilon/(1-\epsilon)$.  Consequently, there holds
\begin{align}\label{eqn:measureestimate2}
\mu(S_\epsilon)\le \tau^{-1} C_f \|e^{1/d}\|_{\ell^d(\Ceb)},
\end{align}
with $C_f = d \|f^{1/d}\|_{\ell^d(\Nhi)}^{d-1}.$
\end{prop}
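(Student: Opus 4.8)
The plan is to bound the Lebesgue measure of the image
$\p\Gamma\whb(\Ceb):=\bigcup_{x_i\in\Ceb}\p\Gamma\whb(x_i)$
of the sub-differential of the convex envelope of $\whb=u_h-(1-\epsilon)v_h$ over its contact nodes, both from above and from below, and then to read off \eqref{eqn:measureestimate} from the elementary identity $\mu(S_\epsilon)=\mu(\Nhi)-\mu(\Ceb)$, which holds because $S_\epsilon$ and $\Ceb$ partition $\Nh$ (so they partition $\Nhi$). Since $\Gamma\whb$ is convex and piecewise linear and the vertices of the triangulation it induces are contact nodes, its sub-differentials at distinct contact nodes have disjoint interiors; hence $\Abs{\p\Gamma\whb(\Ceb)}=\sum_{x_i\in\Ceb}\abs{\p\Gamma\whb(x_i)}$. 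Likewise, by convexity of $u_h$ and \eqref{eqn:uhvh}, $\mu(\Nhi)=\sum_{x_i\in\Nhi}\abs{\p u_h(x_i)}=\Abs{\p u_h(\Nhi)}$.

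\textbf{Upper bound.} Since $(1-\epsilon)v_h$ is a convex nodal function with $\abs{\p\big((1-\epsilon)v_h\big)(x_i)}=(1-\epsilon)^dg_i$, applying Lemma~\ref{lem:convexity_subdifferential} with $v_h$ replaced by $(1-\epsilon)v_h$ gives, for every $x_i\in\Ceb$,
\[
\abs{\p\Gamma\whb(x_i)}^{1/d}\;\le\;f_i^{1/d}-(1-\epsilon)\,g_i^{1/d},
\]
and in particular the right-hand side is $\ge 0$ on $\Ceb$. Raising to the $d$-th power and summing over $\Ceb$,
\[
\Abs{\p\Gamma\whb(\Ceb)}\;\le\;\sum_{x_i\in\Ceb}\big(f_i^{1/d}-(1-\epsilon)g_i^{1/d}\big)^{d}\;=\;\epsilon^{d}\,\nu_\tau(\Ceb),
\]
the last equality being a direct algebraic computation from $\tau=\epsilon/(1-\epsilon)$ and the definition of $e_i$.

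\textbf{Lower bound and conclusion.} The hypotheses $u_h\le v_h$ on $\Nhi$ and $u_h=v_h$ on $\Nhb$ give $\whb\le\epsilon u_h$ on $\Nh$ with equality on $\Nhb$, whence $\Gamma\whb\le\epsilon\,\Gamma u_h$ on $\operatorname{conv}\Nh$, with equality at its extreme points (which, by the nodal assumptions and the density of $\Nhb$ on $\p\Omega$, are boundary nodes). A sliding supporting–plane argument — the one underlying the discrete Alexandroff–Bakelman–Pucci estimates of \cite{KuoTrudinger00,NochettoZhang17A} — then shows that for each $p\in\p u_h(\Nhi)$ the affine function of slope $\epsilon p$ supporting $\epsilon\,\Gamma u_h$ can be lowered until it first touches $\Gamma\whb$; since the vertices of $\Gamma\whb$'s triangulation lie in $\Ceb$, the slope $\epsilon p$ is then a sub-gradient of $\Gamma\whb$ at a contact node, i.e.\ $\epsilon\,\p u_h(\Nhi)\subseteq\p\Gamma\whb(\Ceb)$. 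Hence $\epsilon^{d}\mu(\Nhi)=\Abs{\epsilon\,\p u_h(\Nhi)}\le\Abs{\p\Gamma\whb(\Ceb)}$, which combined with the upper bound gives $\epsilon^{d}\mu(\Nhi)\le\epsilon^{d}\nu_\tau(\Ceb)$, i.e.\ $\mu(\Nhi)\le\nu_\tau(\Ceb)$; together with $\mu(S_\epsilon)=\mu(\Nhi)-\mu(\Ceb)$ this is \eqref{eqn:measureestimate}. I expect the lower bound to be the main obstacle: the sliding–plane argument must be traced to a contact node even when the touching point lies on $\p(\operatorname{conv}\Nh)$, and bookkeeping these near-boundary contributions is exactly what the discrete ABP machinery is built to handle; this is where the geometric assumptions on $\Nh$ enter.

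\textbf{The consequence \eqref{eqn:measureestimate2}.} From \eqref{eqn:measureestimate},
\[
\mu(S_\epsilon)\;\le\;\nu_\tau(\Ceb)-\mu(\Ceb)\;=\;\sum_{x_i\in\Ceb}\Big[\big(f_i^{1/d}+\tfrac1\tau e_i^{1/d}\big)^{d}-f_i\Big].
\]
Bounding each bracket by $\tfrac d\tau\,e_i^{1/d}\big(f_i^{1/d}+\tfrac1\tau e_i^{1/d}\big)^{d-1}$ via the convexity of $t\mapsto t^{d}$ (the brackets with $e_i^{1/d}\le 0$ being nonpositive), controlling the last factor by a multiple of $f_i^{(d-1)/d}$, and then applying Hölder's inequality with exponents $d/(d-1)$ and $d$ together with $\mu(\Ceb)\le\mu(\Nhi)$, one obtains $\mu(S_\epsilon)\le\tau^{-1}C_f\|e^{1/d}\|_{\ell^d(\Ceb)}$ with $C_f=d\|f^{1/d}\|_{\ell^d(\Nhi)}^{d-1}$. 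This last step, and the upper bound above, are routine; the substance of the proof is the ABP-type lower bound.
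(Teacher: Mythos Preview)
Your proof follows essentially the same route as the paper's: bound $\Abs{\p\Gamma\whb(\Ceb)}$ from above via Lemma~\ref{lem:convexity_subdifferential} and from below by $\epsilon^d\mu(\Nhi)$, then rearrange. Two remarks. First, the lower bound you flag as the ``main obstacle'' is in fact the easy part and needs no external ABP machinery: from $\Gamma\whb\le\epsilon\,\Gamma u_h$ on $\Omega$ with equality on $\p\Omega$, any $p\in\epsilon\,\p u_h(x_j)$ ($x_j\in\Nhi$) gives an affine function $L$ with $L\le\epsilon\,\Gamma u_h$; the maximum of $L-\Gamma\whb$ is $\ge0$ at $x_j$ and $\le0$ on $\p\Omega$, so it is attained at an interior point, and since $\Gamma\whb$ is piecewise linear the slope $p$ is then a subgradient of $\Gamma\whb$ at an interior vertex, i.e.\ at a node of $\Ceb$. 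No near-boundary bookkeeping is required. Second, your derivation of \eqref{eqn:measureestimate2} does work---the step ``control $(f_i^{1/d}+\tau^{-1}e_i^{1/d})^{d-1}$ by a multiple of $f_i^{(d-1)/d}$'' is justified on $\Ceb$ because the nonnegativity in Lemma~\ref{lem:convexity_subdifferential} forces $\tau^{-1}e_i^{1/d}\le f_i^{1/d}$ there---but it yields $2^{d-1}C_f$ rather than the stated $C_f$. The paper instead rewrites \eqref{eqn:measureestimate} as $\|f^{1/d}\|_{\ell^d(\Nhi)}\le\|f^{1/d}+\tau^{-1}e^{1/d}\|_{\ell^d(\Ceb)}$, applies the Minkowski inequality, and then uses $a^d-b^d\le d\,a^{d-1}(a-b)$; this gives the exact constant with no termwise estimate.
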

\begin{proof}
We first show that 
\begin{equation}\label{eqn:WTS123}
\sum_{x_i\in \Nhi} \epsilon \partial  u_h (x_i) \subset \sum_{x_i\in \Nhi} \partial \Gamma \whb (x_i),
\end{equation}
where $\whb = u_h -(1-\epsilon)v_h$.
Since $u_h \leq v_h$ in $\Nhi$ and $u_h = v_h$ on $\Nhb$, we get
\[
  \whb \leq \epsilon u_h  \mbox{ in $\Nhi$, }\quad \text{and} \quad
  \whb = \epsilon u_h  \mbox{ on $\Nhb$.}
\]
Taking convex envelope on both side of the inequality, we obtain
\begin{align}
\label{eqn:WTSABC}
  \Gamma \whb(x) \leq \epsilon \Gamma u_h(x) \quad \mbox{in $\Omega$ and} \quad
  \Gamma \whb(x) = \epsilon \Gamma u_h(x) \quad \mbox{on $\bdry$.}
\end{align}
Since $u_h = \Gamma u_h$ on $\Nh$ due to the convexity of $u_h$, the inequality \eqref{eqn:WTSABC} implies 
\eqref{eqn:WTS123}. 

Taking measure on both sides of \eqref{eqn:WTS123} and substituting \eqref{eqn:uhvh} yields
\[
  \epsilon^d \sum_{x_i \in \Nhi} f_i = \epsilon^d \sum_{x_i \in \Nhi} |\partial u_h(x_i)| \leq   \sum_{x_i \in \Ceb} |\partial \Gamma \whb(x_i)|.
\]
In view of the convexity of the measure of the subidfferential \eqref{convexity_subdifferential}, 
\[
  |\partial \Gamma \whb(x_i)|^{1/d} \leq |f^{1/d}_i - (1 - \epsilon) g^{1/d}_i |.
\]
Therefore, we infer that
\[
  \epsilon^d \mu(\Nhi) = \epsilon^d \sum_{x_i \in \Nhi} f_i \leq   \sum_{x_i \in \Ceb} |f^{1/d}_i - (1 - \epsilon) g^{1/d}_i |^d .  
\]
Thus, {subtracting $\epsilon^d \mu(\Ceb)$, we obtain}
\begin{align*}
  \epsilon^d \mu(S_{\epsilon}) =
  \epsilon^d \sum_{x_i \in S_\epsilon} f_i 
  \leq &\; \sum_{x_i \in \Ceb}  \big(|\epsilon f^{1/d}_i + (1 - \epsilon) e_i^{1/d}  |^d - \epsilon^d f_i \big).
\end{align*}
Therefore, dividing $\epsilon^d$, we obtain
\begin{align*}
\mu(S_\epsilon)  \leq  \nu_{\tau}(\Ceb)  - \mu(\Ceb).
\end{align*}

To derive the estimate \eqref{eqn:measureestimate2}, we first
see that \eqref{eqn:measureestimate} is equivalent to 
\begin{align*}
\|f^{1/d} \|_{\ell^d(\Nhi)} \le \| f^{1/d} +  \tau^{-1} e^{1/d} \|_{\ell^d(\Ceb)},
\end{align*}
and therefore $\|f^{1/d}\|_{\ell^d(\Nhi)} - \|f^{1/d}\|_{\ell^d(\Ceb)}\le \tau^{-1} \|e^{1/d}\|_{\ell^d(\Ceb)}$
by the Minkowski inequality.  From this estimate and the inequality $a^d - b^d\le d a^{d-1}(a-b)$ for $a\ge b$,
we derive
\begin{align*}
\mu(S_\epsilon) 
&= \|f^{1/d}\|_{\ell^d(\Nhi)}^d - \|f^{1/d}\|_{\ell^d(\Ceb)}^d\\
&\le d \|f^{1/d}\|_{\ell^d(\Nhi)}^{d-1}\big(\|f^{1/d}\|_{\ell^d(\Nhi)} -  \|f^{1/d}\|_{\ell^d(\Ceb)}\big)\\
&\le C_f \tau^{-1} \|e^{1/d}\|_{\ell^d(\Ceb)}.
\end{align*}\hfill
\end{proof}

%
           \section{$W^2_p$-estimate of the method}\label{S:W21estimate}
%

To establish
$W^{2}_p$-estimates of the method,
we first introduce an estimate of the discrete $L_1$ norm of a nodal function 
in terms of its level sets.

\begin{lemma}\label{lem:L1norm}
Let $s_h$ be a bounded nodal function with 
$|s_h(x_i)| \leq M$ for some $M >0$.
Then, for any $\sigma>0$,
\[
\sum_{x_i\in \Nhi} f_i |s_h(x_i)|   \leq \sigma \sum_{k=0}^N \mu(A_k),
\]
where
\[
 A_k := \{x_i \in \Nhi:\ |s_h(x_i)| \geq k \sigma\},
\]
$\mu(\cdot)$ is given by \eqref{eqn:MuDef}, and $N = \ceil{M / \sigma}$.
\end{lemma}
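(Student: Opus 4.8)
The plan is to use the standard ``layer-cake'' decomposition of the discrete $L_1$ norm with respect to the measure $\mu$, adapted to the fact that we are summing a discretized integral rather than an actual integral. First I would observe that for each fixed node $x_i\in\Nhi$, since $|s_h(x_i)|\le M$, there is a unique integer $k_i$ with $0\le k_i\le N$ such that $k_i\sigma\le |s_h(x_i)| < (k_i+1)\sigma$; in particular $|s_h(x_i)|\le (k_i+1)\sigma$. The key pointwise inequality is then
\[
|s_h(x_i)| \le (k_i+1)\sigma = \sigma\sum_{k=0}^{N} \mathbf{1}\{k \le k_i\} = \sigma\sum_{k=0}^{N}\mathbf{1}\{|s_h(x_i)|\ge k\sigma\},
\]
where the last step uses that $k\le k_i$ is equivalent to $k\sigma \le k_i\sigma \le |s_h(x_i)|$ for integers $k\ge 0$ (and the $k=0$ term always contributes since $|s_h(x_i)|\ge 0$), together with $k_i\le N$ so that all relevant indicators are captured in the range $0\le k\le N$.

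Next I would multiply this inequality by $f_i>0$ and sum over $x_i\in\Nhi$, then interchange the (finite) order of summation:
\[
\sum_{x_i\in\Nhi} f_i|s_h(x_i)| \le \sigma\sum_{x_i\in\Nhi} f_i \sum_{k=0}^{N}\mathbf{1}\{|s_h(x_i)|\ge k\sigma\}
= \sigma\sum_{k=0}^{N}\sum_{x_i\in\Nhi} f_i\,\mathbf{1}\{x_i\in A_k\}
= \sigma\sum_{k=0}^{N}\mu(A_k),
\]
using the definition $\mu(A_k)=\sum_{x_i\in A_k} f_i$ from \eqref{eqn:MuDef} and the definition of $A_k$. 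Both sums are finite ($\Nhi$ is a finite set and $N=\ceil{M/\sigma}$ is finite), so the interchange is trivially justified and there are no convergence issues.

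There is essentially no serious obstacle here; the only point requiring a little care is the bookkeeping at the endpoints, namely checking that $(k_i+1)\sigma$ is correctly recovered as a sum of $N+1$ indicators and that the bound $|s_h(x_i)|\le M$ guarantees $k_i\le \ceil{M/\sigma}=N$ so no layers are lost. One should also note the inequality is genuinely an overestimate by at most one layer per node (the ``$+1$''), which is the standard slack in this type of estimate and is harmless for the subsequent application.
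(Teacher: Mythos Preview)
Your proof is correct and takes essentially the same approach as the paper's: both are the standard layer-cake decomposition, with the paper partitioning $\Nhi$ into shells $P_k=\{x_i:\ k\sigma\le |s_h(x_i)|<(k+1)\sigma\}$ and then summing, while you phrase the identical computation via indicator functions and a direct interchange of sums. The endpoint bookkeeping you flag (that $k_i\le N$) is exactly the only point that needs checking, and you handle it correctly.
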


\begin{proof}
The estimate is illustrated in the Figure \ref{fig:L1norm}. Here, we give a rigorous proof.
\\
Set
\begin{align*}
P_k:=\{x_i\in \Nhi:\ k\sigma \le |s_h(x_i)|<(k+1)\sigma\}.
\end{align*}
Then we clearly have
\begin{align*}
\sum_{x_i\in \Nhi} f_i |s_h(x_i)|  
= \sum_{k=0}^N \sum_{x_i\in P_k} f_i |s_h(x_i)|
\le \sum_{k=0}^N (k+1)\sigma \mu(P_k).
\end{align*}
We also have
\begin{align*}
A_k = \bigcup_{m\ge k}^N P_m,
\end{align*}
and so, since the sets $\{P_k\}$ are disjoint,
\begin{align*}
\mu(A_k) = \sum_{m=k}^N \mu(P_m).
\end{align*}
Therefore
\begin{align*}
\sigma \sum_{k=0}^N \mu(A_k) = \sigma \sum_{k=0}^N \sum_{m=k}^N \mu(P_m) = \sigma \sum_{k=0}^N (k+1) \mu(P_k) \ge  \sum_{x_i\in \Nhi} f_i |s_h(x_i)|.
\end{align*}
\hfill
\end{proof}

\begin{figure}
\begin{center}
\begin{tikzpicture}\label{fig:L1norm}
\draw  [gray, thin] (3,0) -- (11,0);
\draw  [->] (3,0)--(3,4) ;
\draw  [blue, very thick, domain=3:11] plot (\x, {-0.25* pow(0.5*\x-1.5, 2) * (0.5*\x-5.5) });
\draw  [dashed] (11, 0.5) -- (3,0.5) ;
\draw  [dashed] (11, 1) -- (3,1);
\draw  [dashed] (9, 2.5) -- (3,2.5);
\draw  [dashed] (4.6, 0.5) -- (4.6,-1);
\draw  [dashed] (10.7, 0.5) -- (10.7,-1);
\draw  [decorate,decoration={brace,raise=2mm,amplitude=6pt,mirror}] (4.6,-1) -- (10.7,-1) (7.7,-1.8) node {$A_1$};
\draw  [dashed] (5.4, 1) -- (5.4,0);
\draw  [dashed] (10.4, 1) -- (10.4,0);
\draw  [decorate,decoration={brace,raise=2mm,amplitude=6pt,mirror}] (5.4,0) -- (10.4,0) (7.7,-.8) node {$A_2$};
\draw  (11.2,2) [below left]  node {$s_h(x)$};
\draw  (3,0.5) [left] node {$\sigma$};
\draw  (3,1) [left] node {$2\sigma$};
\draw  (3,2.5) [left] node {$M = N \sigma$};
\draw  (7, 3.5) [above] node 
{
$
\sum_{x_i \in \Nhi} f_i |s_h| \leq \sigma \sum_{k=0}^N \mu(A_k).
$
};
\end{tikzpicture}
\caption{
  A pictorial illustration of Lemma \ref{lem:L1norm}. Here, the measure $\mu(A_k) := \sum_{x_i \in A_k} f_i$.
}
\end{center}
\end{figure}
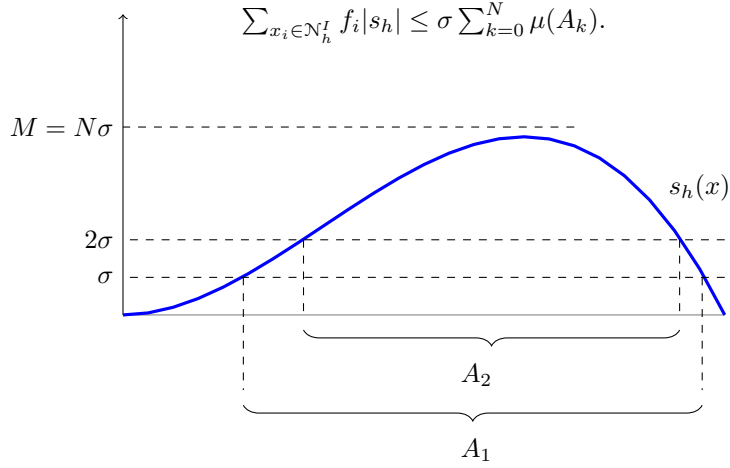

\subsection{Ideal Case}\label{subsec:ideal}
Now we are ready to prove the estimate
in the case that the consistency
error \eqref{eqn:Wishful} holds for all 
interior grid points.
%
\begin{theorem}\label{thm:C4case}
  Let $u$ be the solution of the \MA equation \eqref{MA}. Assume that 
\begin{align}\label{consistencyerror}
\big| |\partial N_h u (x_i)| - f_i \big| \leq C h^{2+d}
\quad 
\mbox{ for every node $x_i \in \Nhi$,}
\end{align}
where $N_h u$ is the interpolation of $u$ on the nodal set $\Nh$.
Assume further that $f$ is uniformly positive on $\Omega$.
Then the error in the weighted $W^2_p$-norm satisfies
\begin{align*}
\|N_h u - u_h\|_{W^2_p(\Nhi)} 
\leq C\left\{
\begin{array}{ll}
h^2 |\ln h| & \text{if $p=1$},\\
h^{2/p} & \text{if $p>1$}
\end{array}
\right.
\end{align*}\
provided that $h$ is sufficiently small.
\end{theorem}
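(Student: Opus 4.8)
The plan is to fix a direction $e\in\bbZ^d$, control the superlevel sets of the nodal function $s_h:=|\delta_e(N_h u-u_h)|$, and then convert that control into the weighted $W^2_p$-bound through the layer-cake lemma, Lemma~\ref{lem:L1norm}. By Proposition~\ref{prop:LinftyEstimate} and the remark following it, $s_h$ is bounded, $s_h(x_i)\le M$ with $M$ independent of $h$. Concretely, the first task is to prove that
\[
\mu\bigl(\{x_i\in\Nhi:\ s_h(x_i)>C_0(\epsilon+h^2)\}\bigr)\ \le\ C_1\,\epsilon^{-1}h^2\qquad\text{for every }\epsilon\in(0,\tfrac12],
\]
with $\mu$ as in \eqref{eqn:MuDef} and $C_0,C_1$ independent of $h$ and $\epsilon$.

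I cannot feed the pair $(u_h,N_h u)$ directly into Lemma~\ref{lem:bound} and Proposition~\ref{prop:measureestimate}, because the ordering hypotheses $u_h\le N_h u$ or $u_h\ge N_h u$ need not hold: the consistency gap of size $h^{2+d}$ must be absorbed first. So I would introduce the scaled interpolants $w_h^+:=(1-Kh^2)N_h u$ and $w_h^-:=(1+Kh^2)N_h u$, where $K$ is a fixed constant depending only on $d$, $\underline f$, and the consistency constant in \eqref{consistencyerror}. These are convex nodal functions vanishing on $\Nhb$, and since $\partial(\lambda v_h)=\lambda\,\partial v_h$ for $\lambda>0$, one has $|\partial w_h^\pm(x_i)|=(1\mp Kh^2)^d|\partial N_h u(x_i)|$. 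Using \eqref{consistencyerror}, the bound $f_i\gtrsim h^d$ (from $f\ge\underline f>0$ and shape-regularity), and $(1\mp Kh^2)^d=1\mp dKh^2+O(h^4)$, a short computation shows that for $K$ large and $h$ small,
\[
|\partial w_h^-(x_i)|\ \ge\ f_i\ =\ |\partial u_h(x_i)|\ \ge\ |\partial w_h^+(x_i)|\qquad\forall x_i\in\Nhi,
\]
so the discrete comparison principle (Lemma~\ref{lem:discrete_compare}) yields $w_h^-\le u_h\le w_h^+$ on $\Nh$. Moreover $|\partial w_h^\pm(x_i)|=f_i+O(h^{2+d})$ and $f_i^{1/d}\lesssim h$, whence (using $|a^{1/d}-b^{1/d}|\le d^{-1}\min(a,b)^{1/d-1}|a-b|$) one gets the key refinement $\bigl|\,|\partial w_h^\pm(x_i)|^{1/d}-f_i^{1/d}\,\bigr|\le Ch^3$ for all $x_i\in\Nhi$.

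Next I would apply Proposition~\ref{prop:measureestimate} once to $(u_h,w_h^+)$ and once to $(w_h^-,u_h)$: the orderings hold by the previous step, the prefactor $C_f$ is bounded since $\sum_{x_i\in\Nhi}f_i\le\|f\|_{L^1(\Omega)}$ and (by \eqref{consistencyerror}) $\sum_{x_i\in\Nhi}|\partial N_h u(x_i)|$ is bounded too, and the defect obeys $\|e^{1/d}\|_{\ell^d}\le(\#\Nhi)^{1/d}\,Ch^3\le Ch^2$ in both cases. With $\tau=\epsilon/(1-\epsilon)$ this gives $\mu(\Nh\setminus\Ceb^{+})\le C\epsilon^{-1}h^2$ and $\mu(\Nh\setminus\Ceb^{-})\le C\epsilon^{-1}h^2$, where $\Ceb^{+}$ is the contact set of $u_h-(1-\epsilon)w_h^+$ and $\Ceb^{-}$ that of $w_h^--(1-\epsilon)u_h$. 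Repeating the computation in the proof of Lemma~\ref{lem:bound} (the second-order difference of a convex nodal function is nonnegative), on $\Ceb^{+}$ one obtains $\delta_e(w_h^+-u_h)(x_i)\le\epsilon\,\delta_e w_h^+(x_i)$; since $\delta_e w_h^+$ and $\delta_e N_h u$ are nonnegative and bounded and $\delta_e(N_h u-u_h)=\delta_e(w_h^+-u_h)+Kh^2\delta_e N_h u$, this forces $\delta_e(N_h u-u_h)(x_i)\le C(\epsilon+h^2)$ there. Symmetrically, on $\Ceb^{-}$ one gets $\delta_e(N_h u-u_h)(x_i)\ge-C(\epsilon+h^2)$, using also that $\delta_e u_h$ is bounded (from the $L_\infty$-estimate). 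Hence $s_h\le C_0(\epsilon+h^2)$ on $\Ceb^{+}\cap\Ceb^{-}$, while $\mu(\Nh\setminus(\Ceb^{+}\cap\Ceb^{-}))\le C_1\epsilon^{-1}h^2$; this is the desired superlevel-set bound. (Because \eqref{consistencyerror} is assumed at every interior node, nodes adjacent to $\p\Omega$ --- where the modified second-order difference operator is used --- require no special treatment.)

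Finally, reparametrizing $t=C_0(\epsilon+h^2)$ converts the superlevel-set bound into $\mu(\{x_i\in\Nhi:s_h(x_i)\ge t\})\le Ch^2/t$ for $t\gtrsim h^2$ (for $0<t\lesssim h^2$ one uses instead $\mu(\cdot)\le\mu(\Nhi)\le\|f\|_{L^1(\Omega)}$, and for $t\ge M$ the set is empty). Applying Lemma~\ref{lem:L1norm} to the nodal function $s_h^p$ with $\sigma$ of order $h^{2p}$, the sets $\tilde A_k=\{x_i:s_h(x_i)\ge(k\sigma)^{1/p}\}$ satisfy $\mu(\tilde A_k)\le Ck^{-1/p}$ for $k\ge1$ and $\mu(\tilde A_0)\le\|f\|_{L^1(\Omega)}$, with $N=\lceil M^p/\sigma\rceil$ of order $h^{-2p}$, so
\[
\sum_{x_i\in\Nhi}f_i\,s_h(x_i)^p\ \le\ \sigma\sum_{k=0}^{N}\mu(\tilde A_k)\ \le\ C h^{2p}\Bigl(1+\sum_{k=1}^{N}k^{-1/p}\Bigr)\ \le\ \begin{cases} C\,h^2\,|\ln h| & p=1,\\ C\,h^2 & p>1,\end{cases}
\]
using $\sum_{k=1}^N k^{-1}\lesssim\ln N\lesssim|\ln h|$ for $p=1$ and $\sum_{k=1}^N k^{-1/p}\lesssim N^{1-1/p}\lesssim h^{-2(p-1)}$ for $p>1$; taking $p$-th roots gives the claim. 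The main obstacle is the second step: the barriers $w_h^\pm$ must perturb the second-order differences of $N_h u$ by only $O(h^2)$ while their \MA measures straddle $f_i$ (to license the comparison principle) and stay within $O(h^{2+d})$ of $f_i$ (so that Proposition~\ref{prop:measureestimate} yields an $O(\epsilon^{-1}h^2)$, not just $o(1)$, bound) --- and this balance relies precisely on the full-strength hypothesis \eqref{consistencyerror}.
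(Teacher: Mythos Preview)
Your proposal is correct and follows essentially the same approach as the paper's proof. The only cosmetic difference is that the paper introduces a single barrier $v_h=(1-Ch^2)^{1/d}N_h u$ (treating the positive part of $\delta_e(v_h-u_h)$ and declaring the negative part analogous), whereas you make both sides explicit with the pair $w_h^\pm=(1\pm Kh^2)N_h u$; the use of Proposition~\ref{prop:measureestimate} to bound $\mu(S_\epsilon)$, the $O(h^3)$ estimate on $|f_i^{1/d}-g_i^{1/d}|$, and the layer-cake summation via Lemma~\ref{lem:L1norm} are identical.
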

\begin{proof}
We start by setting $v_h = (1-Ch^2)^{1/d} N_h u$, where the constant 
$C>0$ is large enough, but independent of $h$, to ensure that (cf.~\eqref{consistencyerror})
\begin{align*}
g_i := |\p v_h(x_i)| = (1-C h^2)|\p N_h u(x_i)|\le f_i.
\end{align*}
By a comparison principle (cf.~Lemma \ref{lem:discrete_compare}), we have $u_h\le v_h$ on $\Nhi$, and we see
that
\begin{align}\label{eqn:IdealConsistencyError123}
|f_i - g_i|\le C h^{2+d}\qquad \forall x_i \in \Nhi
\end{align}
 due to the assumption \eqref{consistencyerror}.
 We also have $g_i \ge C h^d$ provided $h$ is sufficiently small,
 and $|(v_h-N_h u)(x_i)|\le C h^2$.

Note that
\[
\|N_h u - u_h\|_{W^2_p(\Nhi)} \leq 
\|v_h - u_h\|_{W^2_p(\Nhi)} + C h^2 \|N_h u\|_{W^2_p(\Nhi)} 
\]
Thus, to prove the theorem, it suffices to show that 
\[
\sum_{x_i \in \Nhi} f_i | \delta_e(v_h - u_h)(x_i) |^p 
\leq C\left\{
\begin{array}{ll}
h^2 |\ln h| & \text{if $p=1$},\\
h^{2} & \text{if $p>1$}.
\end{array}
\right.
\]
Define the positive and negative parts
of $\delta_e(v_h-u_h)(x_i)$, respectively, as
\begin{align*}
\delta_e^+(v_h - u_h)(x_i) &= \max\{ \delta_e(v_h - u_h)(x_i), 0\},\\
\delta_e^-(v_h - u_h)(x_i) &= \max\{ -\delta_e(v_h - u_h)(x_i), 0\}.
\end{align*}
We shall prove 
\begin{align*}
\sum_{x_i \in \Nhi} f_i  |\delta_e^+ (v_h - u_h)(x_i)|^p \leq C\left\{
\begin{array}{ll}
h^2 |\ln h| & \text{if $p=1$},\\
h^{2} & \text{if $p>1$}.
\end{array}
\right.
\end{align*}
The estimate for the negative part can be proved in a similar fashion. 

Due to the regularity assumption of $u$, a Taylor expansion
shows that $|\delta_e v_h(x_i)|\le C_2$ for all $x_i\in \Nhi$, where $C_2>0$ depends
on $\|u\|_{C^{1,1}(\bar\Omega)}$.  
Moreover, from the $L_\infty$ error estimate, Proposition \ref{prop:LinftyEstimate} 
and the assumption \eqref{nodalassumption} that interior nodes are at least $h/2$ away from the boundary,
we deduce that 
\[
  \delta_e^+(v_h - u_h)(x_i) \leq C_{\infty}\qquad \forall x_i\in \Nhi,
\]
where the constant $C_\infty>0$ depends on $\|u\|_{C^{3,1}(\bar\Omega)}$.

Let $\tau_k = C_2 k^{1/p} h^2$, and define the set
\[
  A_k := \{ x_i \in \Nhi,  \quad  \delta^+_e (v_h - u_h)(x_i) \geq  \tau_k \}.
\]
%
%
By Lemma \ref{lem:L1norm}
with $s_h(x_i) = |\delta_e^+(v_h-u_h)(x_i)|^p$, $\sigma = C^p_2 h^{2p}$,
 and $M= C_\infty^p$, we obtain
\begin{align}\label{proof:W21sum}
\sum_{x_i\in \Nhi} f_i |\delta_e^+ (v_h-u_h)(x_i)|^p
\le &\; C h^{2p} \left( \mu(\Nhi) + \sum_{k=1}^{ C h^{-2p}} \mu(A_k) \right).
\end{align}

We aim to estimate the measure of set $\mu(A_k)$. 
Due to the relations of the second order difference and contact set given in Remark \ref{rmk:observation},
we have $A_k \subset S_{\epsilon_k} = \Nhi\setminus \mathcal{C}_{\eps_k}$
with $\epsilon_k\in (0,1)$ satisfying $\tau_k = \epsilon_k/(1-\epsilon_k)$.
Therefore, by the estimate \eqref{eqn:measureestimate2} given
in  Proposition \ref{prop:measureestimate},  
\begin{align*}
\mu(A_k) \le 
\mu(S_{\epsilon_k})\le \frac{C_f}{\tau_k} \|g^{1/d}-f^{1/d}\|_{\ell^d(\calC_{\epsilon_k})}
 = \frac{C_f}{k^{1/p} h^2} \|g^{1/d}-f^{1/d}\|_{\ell^d(\calC_{\epsilon_k})}.
\end{align*}
From the concavity of $t\to t^{1/d}$, we have
$(t+\epsilon)^{1/d}-t^{1/d}\le d^{-1} t^{1/d-1} \epsilon$.
Setting $t = g_i$ and $\epsilon = f_i-g_i\ge 0$, we get
\begin{align*}
|f_i^{1/d}-g_i^{1/d}| = f_i^{1/d} - g_i^{1/d} \le d^{-1} g_i^{1/d-1} (f_i - g_i)\le Ch^3
\end{align*}
 due to the consistency error \eqref{eqn:IdealConsistencyError123}
 and the lower bound $g_i \ge Ch^d$.  Consequently, we find that
\[
  \norm{f^{1/d} - g^{1/d}}_{\ell^d(\calC_{\epsilon_k})} \leq C h^2, 
\]
and therefore $\mu(A_k)\le \frac{C}{k^{1/p}}$.
Applying this bound in \eqref{proof:W21sum}, we derive the estimate
\begin{align*}
\sum_{x_i\in \Nhi} f_i |\delta_e^+ (u_h-v_h)(x_i)|^p \leq &\; C h^{2p} \sum_{k = 1}^{C h^{-2p}} \frac 1  {k^{1/p}}\le 
C\left\{
\begin{array}{ll}
h^2 |\ln h| & \text{if $p=1$},\\
h^{2} & \text{if $p>1$}.
\end{array}
\right.
\end{align*}
This completes the proof.
\end{proof}

\begin{remark}
It is worth mentioning that the assumption on the consistency error \eqref{consistencyerror} holds for nodes bounded away from the boundary $\p \dm$ provided that $u \in C^{3,1}(\dm)$. 
However, for nodes close to the boundary $\p \dm$, such an estimate holds only for structured domain, such as a rectangle domain; see the first numerical experiment in Section \ref{S:numerics}. In general, 
this estimate may not be true. 
In fact, Lemma \ref{lem:QuadraticConsistency} 
shows that the (relative) consistency error, $\mathcal{O}(h)$ away from the boundary, is of order $\mathcal{O}(1)$.
In the following subsection, we take into account the lack of consistency
in the boundary layer.
\end{remark}

\subsection{Estimate on general domain}

To this end, we define the barrier nodal function
\[
b_h(x_i) = 
\begin{cases}
 -h^2    \quad &\text{if } x_i \in \Nhi,
\\
 0        \quad &\text{if } x_i \in \Nhb,
\end{cases}
\]
which will be used to ``push down'' 
the graph of the nodal interpolant of $u$
and as such, develop error estimates
in a general setting.
%


\begin{theorem}
\label{thm:W2d}
Let $u \in C^{3,1}(\bar\dm)$ be the solution of the \MA equation \eqref{MA} with $0 < \lambda I \leq D^2 u \leq \Lambda I$,
and assume that   the nodal set $\Nhi$ translation invariant and that $f$ is uniformly positive on $\Omega$. Then the error
in the weighted $W^{2,p}$-norm satisfies
\[
\|N_h u - u_h\|_{W^2_p(\Nhi)} \leq
C\begin{cases}
  h^{1/p}   \quad &\mbox{if $p > d$,}
\\
h^{1/d} \big(\ln\left(\frac 1 h \right)\big)^{1/d}  \quad &\mbox{if $p \le d$,}
\end{cases}
\] 
where $N_h u$ is the interpolation of $u$ on the nodal set $\Nh$ and the constant $C$ 
depends on $\|u\|_{C^{3,1}(\bar\dm)}$, the dimension $d$, and the constant $p$.
\end{theorem}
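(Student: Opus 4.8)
The plan is to reduce the general domain case to the ideal case of Theorem \ref{thm:C4case} by splitting the interior nodes into a bulk region $\Nhi^{\rm bulk}$ where $\dist(x_i,\p\dm)\ge Rh$ (with $R$ from Lemma \ref{lem:QuadraticConsistency}) and a boundary layer $\Nhi^{\rm bl}$ where $\dist(x_i,\p\dm)< Rh$. On the bulk region the consistency estimate \eqref{consistencyerror} holds with exponent $h^{2+d}$ since $u\in C^{3,1}(\bar\dm)$, so the argument of Theorem \ref{thm:C4case} applies there essentially verbatim. The new work is to control the contribution of the boundary layer, where Lemma \ref{lem:QuadraticConsistency} only gives the weak bound $|\,|\p N_h u(x_i)|-f_i| \le Ch^d$, i.e.\ relative consistency $\mathcal O(1)$.

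To handle the boundary layer I would introduce the pushed-down comparison function $v_h := (1-Ch^2)^{1/d} N_h u + b_h$, using the barrier $b_h$ defined just before the theorem, with $C$ chosen large enough that $g_i := |\p v_h(x_i)| \le f_i$ for \emph{all} interior nodes, including those near the boundary — here one uses that adding the barrier $b_h$ contributes a nonpositive amount (a single "bump" of size $h^2$) to the Monge–Amp\`ere measure at the interior layer, or more precisely that $b_h$ only affects nodes in the boundary layer and there the crude $\mathcal O(h^d)$ gap in \eqref{eqn:Wishful} together with the $\mathcal O(h^d)$ contribution of $b_h$ can be absorbed. By Lemma \ref{lem:discrete_compare}, $u_h\le v_h$ on $\Nhi$. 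Then I split $\|N_hu - u_h\|_{W^2_p(\Nhi)}$ using the triangle inequality into $\|v_h-u_h\|_{W^2_p(\Nhi)}$ plus the consistency-of-interpolation term $\|N_hu - v_h\|_{W^2_p(\Nhi)}$; the latter involves $\delta_e b_h$, which is nonzero only at $\mathcal O(h^{-(d-1)})$ layer nodes and there is of size $\mathcal O(h^{-2})\cdot h^2 = \mathcal O(1)$ wait — more carefully, $\delta_e b_h(x_i) = \mathcal O(1)$ at $\mathcal O(h^{-(d-1)})$ nodes each weighted by $f_i\sim h^d$, giving $\sum f_i|\delta_e b_h|^p \le C h^d \cdot h^{-(d-1)} = Ch$, which is already within the claimed bound when $p\ge 1$.

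For the main term $\sum_{x_i\in\Nhi} f_i|\delta_e^\pm(v_h-u_h)(x_i)|^p$ I would run the level-set machinery of Theorem \ref{thm:C4case}: set $\tau_k = C_2 k^{1/p} h^2$, let $A_k = \{x_i: \delta_e^+(v_h-u_h)(x_i)\ge\tau_k\}\subset S_{\eps_k}=\Nhi\setminus\Ceb[\eps_k]$ by Remark \ref{rmk:observation}, and apply Lemma \ref{lem:L1norm}. The key new point is the estimate of $\|g^{1/d}-f^{1/d}\|_{\ell^d(\calC_{\eps_k})}$ via Proposition \ref{prop:measureestimate}: split the $\ell^d$ sum over $\calC_{\eps_k}$ into bulk and layer nodes. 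On bulk nodes $|f_i^{1/d}-g_i^{1/d}| \le Ch^3$ as before, contributing $\le Ch^{2}$ to the $\ell^d$ norm; on the $\mathcal O(h^{-(d-1)})$ layer nodes one only has $|f_i^{1/d}-g_i^{1/d}| = \mathcal O(g_i^{1/d-1}(f_i-g_i)) = \mathcal O(h^{1-d}\cdot h^d)=\mathcal O(h)$ per node, so the layer contributes $\big(h^{-(d-1)}\cdot h^d\big)^{1/d} = h^{1/d}$ to $\|g^{1/d}-f^{1/d}\|_{\ell^d(\calC_{\eps_k})}$, which dominates. Hence $\mu(A_k) \le \frac{C_f}{\tau_k}\,h^{1/d} = \frac{C}{k^{1/p}h^{2}}\cdot h^{1/d}$, and feeding $\mu(\Nhi)=\mathcal O(1)$ and this bound into the analogue of \eqref{proof:W21sum} gives
\[
\sum_{x_i\in\Nhi} f_i|\delta_e^+(v_h-u_h)(x_i)|^p \le C h^{2p}\Big(1 + h^{1/d-2}\sum_{k=1}^{Ch^{-2p}} k^{-1/p}\Big),
\]
and evaluating the sum ($\mathcal O(h^{-2p+2})$ if $p>1$, $\mathcal O(|\ln h|)$ if $p=1$, with the cross-over when $p=d$ producing the logarithm) yields the stated rates $h^{1/p}$ for $p>d$ and $h^{1/d}(\ln\frac1h)^{1/d}$ for $p\le d$ after taking the $p$-th root.

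The main obstacle I anticipate is making the comparison function $v_h$ simultaneously satisfy $|\p v_h(x_i)|\le f_i$ at \emph{every} interior node — including the boundary layer where the consistency gap is only $\mathcal O(h^d)$ rather than $\mathcal O(h^{d+2})$ — while keeping $v_h - N_hu$ small enough in $L_\infty$ that the $L_\infty$ bound $\delta_e^\pm(v_h-u_h)\le C_\infty$ still holds; this is exactly what the barrier $b_h$ is designed to fix, and verifying that its Monge–Amp\`ere measure contribution at layer nodes has the right sign and size (so that subtracting $(1-Ch^2)^{1/d}$ times the interpolant still leaves room) is the delicate bookkeeping step. A secondary technical point is bounding $\sum f_i|\delta_e b_h|^p$ and confirming it is absorbed into the final rate for all $p\in[1,\infty)$, which requires only counting layer nodes and using $f_i\sim h^d$.
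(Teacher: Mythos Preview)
Your level-set machinery is the right framework, but the estimate does not close: feeding the boundary-layer contribution $\|e^{1/d}\|_{\ell^d(\calC_{\epsilon_k})}\le Ch^{1/d}$ into the \emph{linear} bound \eqref{eqn:measureestimate2} gives $\mu(A_k)\le C h^{1/d}/\tau_k$, and with your choice $\tau_k=C_2k^{1/p}h^2$ the sum in your display evaluates (for any $p>1$) to $C h^{2p}\cdot h^{1/d-2}\cdot h^{-2p+2}=Ch^{1/d}$, hence $\|N_hu-u_h\|_{W^2_p(\Nhi)}\le Ch^{1/(dp)}$, not $h^{1/p}$. The log at $p=d$ never appears because the exponent $1/p$ in $\sum k^{-1/p}$ is not the one that should equal $1$; the correct crossover must come from $\sum k^{-d/p}$. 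The missing idea is that at layer nodes $|e_i^{1/d}|\sim h$ is comparable to $f_i^{1/d}\sim h$, so when $\tau_k$ is small the summand $(f_i^{1/d}+\tau_k^{-1}e_i^{1/d})^d-f_i$ in \eqref{eqn:measureestimate} is of order $(h/\tau_k)^d$, not $h^{d-1}\cdot h/\tau_k$; i.e.\ the Minkowski step leading to \eqref{eqn:measureestimate2} throws away exactly the $\tau_k^{-d}$ dependence you need. The paper instead bounds $\nu_{\tau_k}(\Ceb)-\mu(\Ceb)$ directly term-by-term by $\frac{Ch^{d-1}}{\tau_k^{d}}|e_i^{1/d}|$, sums separately over $\Omega_h$ and its complement to get $\mu(A_k)\le Ch/\tau_k^{d}$, and takes the coarser level spacing $\tau_k=C_2k^{1/p}h$; then $h^p\sum_{k=1}^{h^{-p}} h\,\tau_k^{-d}=h^{p+1-d}\sum_{k=1}^{h^{-p}} k^{-d/p}$ produces the dichotomy $p>d$ versus $p=d$ (with the log), and H\"older handles $p<d$.

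There is also a gap in your comparison setup. At layer nodes the relative consistency defect is $\mathcal O(1)$, so rescaling by $(1-Ch^2)^{1/d}$ only shrinks $|\p N_hu(x_i)|$ by $\mathcal O(h^{d+2})$, which cannot absorb an $\mathcal O(h^d)$ overshoot; moreover adding $b_h$ (which is $-h^2$ on $\Nhi$ and $0$ on $\Nhb$) \emph{lowers} the interior relative to the boundary, hence \emph{increases} the subdifferential at layer nodes and goes the wrong way for $g_i\le f_i$. The paper avoids this entirely: it sets $v_h=N_hu-Cb_h$ (pushing the interior \emph{up}) and uses the already-proved $L_\infty$ estimate of Proposition~\ref{prop:LinftyEstimate} to conclude $u_h\le v_h$ directly, without ever requiring $g_i\le f_i$ at every node; one only needs $|f_i-g_i|\le Ch^{d+2}$ in the bulk and $|f_i-g_i|\le Ch^d$ in $\Omega_h$, both of which follow from Lemma~\ref{lem:QuadraticConsistency} and the observation that $-Cb_h$ leaves the subdifferential unchanged away from $\Omega_h$.
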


\begin{proof}
We define the boundary layer:
\[
\dm_h := \{ x_i \in \Nhi, \dist(x_i, \bdry) \leq R h \},
\]
where the constant $R$ is the constant in the consistency error, Lemma \ref{lem:QuadraticConsistency}, which depends on the ellipticity constants $\lambda$ and $\Lambda$ of $D^2 u$.
We set
\[
v_h = N_h u - Cb_h,\qquad g_i  = |\p v_h(x_i)|,
\]
where the constant $C>0$ is sufficiently
large so that $u_h \le v_h$; see
Proposition \ref{prop:LinftyEstimate}.
It is clear from the definition of $b_h$
that 
\[
  |\p v_h(x_i)| = |\p N_h u(x_i)| \quad \mbox{for any $x_i \in \Nhi \setminus \Omega_h$}
\]
and 
\[
  |\p N_h u(x_i)| \geq |\p v_h(x_i)| \geq 0  \quad \mbox{for any $x_i \in \Omega_h$}.
\]
This implies that 
$|f_i-g_i|\le C h^{2+d}$ in $\Nhi\setminus \Omega_h$
and $|f_i-g_i|\le C h^d$ in $\Omega_h$.
We have that $|\delta_e v_h(x_i)|\le C_2$
and $|\delta_e(v_h-u_h)(x_i)|\le C_\infty$
for all $x_i\in \Nhi$.
As in Theorem \ref{thm:C4case}, we shall prove the estimate for the positive part: 
\begin{align*}
\sum_{x_i \in \Nhi} f_i  \left( \delta_e^+ (v_h - u_h)(x_i) \right)^p 
\leq 
\begin{cases}
  C h   \quad &\mbox{if $p > d$,}
\\
C h \ln\left(\frac 1 h \right)  \quad &\mbox{if $p = d$.}
\end{cases}
\end{align*}
The estimate for the negative part can be proved in a similar fashion. 
Also note that the estimate for $p < d$ follows from the estimate of $p = d$ and  H\"older's inequality:
\[
  \|N_h u - u_h\|_{W^2_p(\Nhi)} \leq C_{\mu} \|N_h u - u_h\|_{W^2_d(\Nhi)} \quad \mbox{where } C_\mu := \mu(\Nhi)^{1/p - 1/d} .
\]

We set  $\tau_k = C_2 k^{1/p} h$
and define the set
\[
    A_k := \{ x_i \in \Nhi,  \quad  \left(\delta^+_e (v_h - u_h)(x_i)\right) \geq \tau_k \}. 
\]
Then, by similar arguments as in Theorem \ref{thm:C4case}, 
we find by Lemma \ref{lem:L1norm} that
\begin{align}
\label{proof:W2d}
\sum_{x_i\in \Nhi} f_i \left( \delta_e^+ (v_h-u_h)(x_i) \right)^p 
\le &\; C_2 h^p \left( \mu(\Nhi) + \sum_{k=1}^{ h^{-p}} \mu(A_k) \right).
\end{align}

To estimate the measure of set $\mu(A_k)$, we note that $A_k \subset S_{\epsilon_k} = \Nhi \setminus \mathcal{C}_{\epsilon_k}$
with $\tau_k = \epsilon_k/(1 - \epsilon_k)$.
Invoking the estimate of the measure of the non-contact set $S_{\epsilon}$
stated in Proposition \ref{prop:measureestimate}, we obtain 
\[
\mu(A_k) \leq \mu (S_{\epsilon_k}) \leq \nu_{\tau_k}(\mathcal{C}_{\epsilon_k}) - \mu(\mathcal{C}_{\epsilon_k}).
\]
We then
divide the estimate of $\nu_{\tau_k}(\mathcal{C}_{\epsilon_k}) - \mu(\mathcal{C}_{\epsilon_k}) $ into two parts:
\begin{align*}
\nu_{\tau_k}(\mathcal{C}_{\epsilon_k}) - \mu(\mathcal{C}_{\epsilon_k}) = 
&\;
\sum_{x_i \in \mathcal{C}_{\epsilon_k}}\left[
\left( f_i^{1/d} + \frac 1 {\tau_k} e_i^{1/d} \right)^d - f_i\right]
\\
= &\;
\left( 
\sum_{x_i \in \mathcal{C}_{\epsilon_k} \cap \dm_{h}} + \sum_{x_i \in \mathcal{C}_{\epsilon_k} \setminus \dm_{h}} 
\right)
\left[\left( f_i^{1/d} + \frac 1 {\tau_k} e_i^{1/d} \right)^d - f_i\right],
\end{align*}
where we recall that $e_i^{1/d} = f_i^{1/d}-g_i^{1/d}$.
Since $f_i^{1/d} = \mathcal{O}(h)$ and $g_i^{1/d}= \mathcal{O}(h)$, we have
\begin{align*}
\left|\left( f_i^{1/d} + \frac 1 {\tau_k} e_i^{1/d} \right)^d - f_i\right|
&\le \frac{d}{\tau_k} \max\{\big| f_i^{1/d} + \frac{1}{\tau_k} e_i^{1/d}\big|,f_i^{1/d}\}^{d-1} |e_i^{1/d}|\\
&\le  \frac{C h^{d-1}}{\tau_k^d} |e_i^{1/d}|.
\end{align*} 

In the set $\mathcal{C}_{\epsilon_k} \cap \dm_{h}$,  the consistency error satisfies $|e_i^{1/d}| = \mathcal{O}(h)$; 
see Lemma \ref{lem:QuadraticConsistency}. Therefore, we have
\[
\left| 
\left( f_i^{1/d} + \frac 1 {\tau_k} e_i^{1/d} \right)^d - f_i
\right| 
\leq \frac{C h^d} {\tau_k^d}\qquad \forall x_i \in \mathcal{C}_{\epsilon_k}\cap \Omega_h.
\]
On the other hand,
in the set $\mathcal{C}_{\epsilon_k} \setminus \dm_{h}$, we conclude
as in Theorem \ref{thm:C4case}, that $|e_{i}^{1/d}| = \mathcal{O}(h^3)$, and
\[
\left| 
\left( f_i^{1/d} + \frac 1 {\tau_k} e_i^{1/d} \right)^d - f_i
\right| 
\leq \frac{C h^{2+d}} {\tau_k^d}.
\]

Combining both estimate and applying the fact that $\# (\mathcal{C}_{\epsilon_k}\cap \Omega_h) \leq C h^{1-d}$
and $\# (\mathcal{C}_{\epsilon_k} \setminus \Omega_h)\le Ch^{-d}$, we obtain
\[
\nu_{\tau_k}(\Ceb) - \mu(\Ceb) 
\leq
\frac{C h}{\tau_k^d} + \frac{C h^2}{\tau_k^d}
\leq 
\frac{C h}{\tau_k^d} 
\]
because $h \leq 1$. 
Hence, we conclude that 
\[
\mu(A_k) \leq \frac{C h}{\tau_k^d} .
\]

Applying this estimate to  \eqref{proof:W2d}, we  arrive at
\[
  \sum_{x_i \in \Nhi} f_i |\delta_e^+ (v_h - u_h)(x_i)|^p \leq C_2 h^p \sum_{k = 1}^{h^{-p}} \frac{h}{h^d k^{d/p}}.
\]
Since 
\[
\sum_{k = 1}^{h^{-p}} \frac{1}{k^{d/p}} \leq 
\begin{cases}
  C(d, p) h^{d - p}    \quad &\mbox{if $p>  d$,}
\\
C \ln \left(\frac 1 h \right) \quad &\mbox{if $p = d$,}
\end{cases}
\]
we conclude that 
\[
\sum_{x_i \in \Nhi} f_i |\delta_e^+ (v_h - u_h)(x_i)|^p 
\leq 
\begin{cases}
  C h    \quad &\mbox{if $p>d$,}
\\
C h \ln\left(\frac 1 h \right) \quad &\mbox{if $p = d$.}
\end{cases}
\]
Finally we note that by H\"older's inequality, there holds for $p<d$,
\begin{align*}
\|v_h\|_{W^2_p(\Nhi)}  
&= \Big(\sum_{x_i\in \Nhi} f_i |\delta_e v_h(x_i)|^p\Big)^{1/p}\\
&\le \big(\sum_{x_i\in \Nhi} f_i |\delta_e v_h(x_i)|^d\Big)^{1/d}\Big( \sum_{x_i\in \Nhi} f_i \Big)^{(d-p)/(dp)}\le C\|v_h\|_{W^2_d(\Nhi)}.
\end{align*}
This completes the proof.
\end{proof}

%
           \section{Numerical experiments}\label{S:numerics}
%
In this section, we perform  numerical examples to illustrate  the accuracy of the method,
and to compare the results with the theory.
In the tests, we replace the homogeneous boundary condition \eqref{MA2}
with $u=g$ on $\partial \Omega$.  The theoretical results
developed in the previous sections can be applied to this 
slightly more general problem with minor modifications.

We consider three different test problems, each reflecting different scenarios of regularity. 
Each set of problems is performed in two dimensions ($d=2$), and errors are reported
in the (discrete) $L_\infty$, $H^1$, $W_1^2$, and $W^2_2$ norms.   Here, 
a nine-point stencil is used in the definition of the $W^2_p$ norms 
with $e_1 = (1,0)$, $e_2 = (0,1)$, $e_3 =(1,1)$ and $e_4 = (1, -1)$.  That is, 
with an abuse of notation, we set
\[
  \|v\|_{W^2_p(\Nhi)}^p = \sum_{j=1}^{4} \sum_{x_i \in \Nhi} |\delta_{e_j}v(x_i)|^p.
\]
As explained in \cite{NochettoZhang17} and in Section \ref{sec:Method}, 
a convex nodal function induces a triangulation of $\Omega$
whose set of vertices corresponds to $\Nh$.
For a computed solution $u_h$, we associate with it
a piecewise linear polynomial on the induced mesh, which we still
denote by $u_h$, and use the quantity $\|u-u_h\|_{H^1(\Omega)}$
to denote the $H^1$ error in the experiments below.

A summary of the theoretical results in Sections \ref{sub:BM} and \ref{S:W21estimate} when $d=2$ are  
\begin{align*}
\|N_h u-u_h\|_{L_\infty(\Nhi)} = \mathcal{O}(h^2),\qquad
\|N_h u-u_h\|_{W^2_p(\Nhi)} = \mathcal{O}(h^{1/2-\epsilon}) ,\ p=1,2
\end{align*}
for any $\epsilon>0$,
  provided that $u \in C^{3,1}(\bar\dm)$.

\subsection*{Example I: Smooth Solution $u\in C^\infty(\bar\Omega)$}
We consider the example
\begin{align}
\label{eqn:SmoothSolnExample}
u(x,y) = e^{\frac{x^2 + y^2}2},\quad f(x,y) = (1 + x^2 + y^2) e^{x^2 + y^2},\quad \text{and }\Omega = (-1,1)^2,
\end{align}
and list the resulting errors and rates of the scheme in Table \ref{Table:Example1}.
The Table clearly shows that the errors decay with rate $\mathcal{O}(h^2)$
in all norms.  This behavior matches the theoretical results
of Proposition \ref{prop:LinftyEstimate}, but indicates
that the $W^2_p$ estimates stated in Theorem \ref{thm:W2d} are not sharp.

\begin{table}[h]
\begin{center}
\begin{tabular}[t]{ | l | c | c | c | c | c | c | c | c |}
\hline
     $h$               & $L_{\infty}$ & rate  & $H^1$ & rate & $W^2_1$ & rate & $W^2_2$ & rate \\
 \hline
1 & 1.12e-01 & 0.00 & 2.24e-01 &  & 4.49e-01 &  & 1.44e+01 &  
\\ 
 \hline 
 1/2 & 4.78e-02 & 1.23 & 1.35e-01 & 0.73 & 6.02e-01 & -0.42 & 4.24e-01 & 5.08 
\\ 
 \hline 
 1/4 & 1.37e-02 & 1.80 & 4.35e-02 & 1.63 & 2.94e-01 & 1.03 & 1.93e-01 & 1.13 
\\ 
 \hline 
 1/8 & 3.55e-03 & 1.95 & 1.16e-02 & 1.91 & 9.93e-02 & 1.57 & 6.34e-02 & 1.61 
\\ 
 \hline 
 1/16 & 8.96e-04 & 1.99 & 2.94e-03 & 1.98 & 2.86e-02 & 1.80 & 1.80e-02 & 1.82 
\\ 
 \hline 
 1/32 & 2.24e-04 & 2.00 & 7.39e-04 & 1.99 & 7.66e-03 & 1.90 & 4.79e-03 & 1.91 
\\ 
 \hline 
 1/64 & 5.61e-05 & 2.00 & 1.85e-04 & 2.00 & 1.98e-03 & 1.95 & 1.24e-03 & 1.95 
\\ 
 \hline 
\end{tabular}
\label{Table:Example1}
\caption{Rate of convergence for a  smooth solution (Example I).}
\end{center}
\end{table}

\subsection*{Example II: Piecewise Smooth Solution $u \in W^{2}_{\infty}$}
In this example, the domain is 
$\Omega= (-1,1)^2$, and the exact solution
and data are taken to be
\begin{align*}
 u(x)  =& \left\{
\begin{array}{ll}
2 |x|^2  \quad &\text{in $|x| \leq 1/2$},  
\\
 2(|x| - 1/2)^2 + 2 |x|^2 \quad &\text{in $ 1/2 \leq |x| $},
\end{array} \right.
\\
 f(x)  =& \left\{
\begin{array}{ll}
16  \quad \quad &\text{in $|x| \leq 1/2$},  
\\
64 -  16 |x|^{-1} \quad \qquad\hspace{0.59cm}&\text{in $ 1/2 \leq |x| $}.
\end{array} \right.
\end{align*}

A simple calculation shows that $u \in C^{1,1}(\bar\Omega)$ and $u \in C^4(\dm \setminus \partial B_1)$,
but $u\not\in C^2(\bar\Omega)$.
The errors and rates of convergence are given in Table \ref{Table:Example2}.
The table shows that, while all errors tend to zero as the mesh is refined, 
the rates of convergence in the $L_\infty$ and $W^2_1$ norms are less obvious than the previous set of experiments.
Nonetheless, while Theorem \ref{thm:W2d} assumes more regularity 
of the exact solution, we do observe a convergence rate of approximately $\mathcal{O}(h^{1/2})$
in the $W^2_2$ as stated in the theorem.

\begin{table}[h]
\begin{center}
\begin{tabular}[t]{ | l | c | c | c | c | c | c | c | c |}
\hline
     $h$               & $L_{\infty}$ & rate  & $H^1$ & rate & $W^2_1$ & rate & $W^2_2$ & rate \\
\hline 
1 & 4.02e-01 & 0.00 & 8.04e-01 & 0.00 & 1.61 & 0.00 & 1.61 & 0.00 
\\ 
 \hline 
 1/2 & 4.19e-02 & 3.26 & 1.30e-01 & 2.63 & 6.08e-01 & 1.40 & 5.39e-01 & 1.58 
\\ 
 \hline 
 1/4 & 2.89e-02 & 0.53 & 6.84e-02 & 0.92 & 6.46e-01 & -0.09 & 5.54e-01 & -0.04 
\\ 
 \hline 
 1/8 & 1.27e-02 & 1.18 & 3.50e-02 & 0.97 & 5.14e-01 & 0.33 & 4.54e-01 & 0.29 
\\ 
 \hline 
 1/16 & 4.58e-03 & 1.47 & 1.38e-02 & 1.34 & 2.76e-01 & 0.90 & 3.15e-01 & 0.53 
\\ 
 \hline 
 1/32 & 8.02e-04 & 2.51 & 3.59e-03 & 1.94 & 1.08e-01 & 1.35 & 2.08e-01 & 0.60 
\\ 
 \hline 
 1/64 & 4.33e-04 & 0.89 & 1.50e-03 & 1.26 & 6.36e-02 & 0.77 & 1.56e-01 & 0.42
\\ 
 \hline 
\end{tabular}
\label{Table:Example2}
\caption{Rate of convergence of piecewise smooth viscosity solution (Example II).}
\end{center}
\end{table}

\subsection*{Example III: Singular Solution $u\in W^2_p$ with $p < 2$}
In the last series of experiments, the domain is $\Omega = (-1,1)^2$,
and the solution and data are
\begin{align*}
 u(x) & = \left\{
\begin{array}{ll}
x^4 + \frac 32 y^2/x^2  \quad &\text{in $|y| \leq |x|^3$},  
\\
 \frac 12 x^2 y^{2/3} + 2 y^{4/3} \quad &\text{in $ |y| \geq |x|^3 $},
\end{array} \right.\\
\\
 f(x)  =& \left\{
\begin{array}{ll}
36-9y^2/x^6 \quad\ \hspace{1cm} &\text{in $|y|\leq |x|^3$},\\
\frac89 - \frac59 x^2/y^{2/3} \quad &\text{in $|y|>|x|^3$}.  
\end{array}
\right.
\end{align*}
This example is constructed in \cite{Wang95} to show that $D^2 u(x)$ may not be in $W^2_p$ for large $p$ for discontinuous $f$. 
The errors
of the method for this problem
are listed in Table \ref{Table:Example3}.
Because the exact solution does not
enjoy $W^2_2$ regularity, 
it is not expected that the discrete solution will converge
in the discrete $W^2_2$ norm, and this is observed
in the table.
However, we do observe 
convergence in the $L_\infty$, $H^1$, and $W^2_1$ norms
with approximate rates
$\|N_h u- u_h\|_{L_\infty(\Nhi)} = \mathcal{O}(h^{4/3})$,
$\|N_h u-u_h\|_{H^1(\Nhi)} = \mathcal{O}(h)$,
and $\|N_h u-u_h\|_{W^2_1(\Nhi)} = \mathcal{O}(h^{1/2})$.

\begin{table}[h]
\begin{center}
\begin{tabular}[t]{ | l | c | c | c | c | c | c | c | c |}
\hline
     $h$               & $L_{\infty}$ & rate  & $H^1$ & rate & $W^2_1$ & rate & $W^2_2$ & rate \\
\hline 
1 & 8.36e-01 & 0.00 & 1.67 & 0.00 & 3.35 & 0.00 & 3.35 & 0.00 
\\ 
 \hline 
 1/2 & 2.34e-01 & 1.84 & 9.11e-01 & 0.88 & 5.48 & -0.71 & 3.94 & -0.24 
\\ 
 \hline 
 1/4 & 1.86e-01 & 0.33 & 4.80e-01 & 0.92 & 4.90 & 0.16 & 4.02 & -0.03 
\\ 
 \hline 
 1/8 & 8.52e-02 & 1.13 & 2.41e-01 & 1.00 & 4.00 & 0.29 & 3.94 & 0.03 
\\ 
 \hline 
 1/16 & 3.41e-02 & 1.32 & 1.02e-01 & 1.24 & 2.38 & 0.75 & 3.33 & 0.24 
\\ 
 \hline 
 1/32 & 1.35e-02 & 1.34 & 4.79e-02 & 1.09 & 1.59 & 0.58 & 3.17 & 0.07 
\\ 
 \hline 
\end{tabular}
\label{Table:Example3}
\caption{Rate of convergence of $W^2_p$ solution with $p<2$ (Example III).}
\end{center}
\end{table}

%
%
%


%
\end{document}